\theoremstyle{plain}
\newtheorem{theorem}{Theorem}[section]
\newtheorem{lemma}{Lemma}[section]
\theoremstyle{definition}
\newtheorem{example}{\textit{Example}} 
\numberwithin{equation}{section}
\begin{document}
	\title[A new sequence of operators involving Laguerre polynomials]%
	{Approximation by a new sequence of operators involving Laguerre polynomials}
	\author[K. Kumar \and N. Deo \and D. K. Verma]%
	{Kapil Kumar* \and Naokant Deo \and Durvesh Kumar Verma}
	
	\newcommand{\acr}{\newline\indent}
	
	\address{\llap{*\,} Department of Mathematics\acr
		Delhi Technological University\acr
		Delhi-110042\acr
		INDIA}
	\email{kapil\_2k19phdam502@dtu.ac.in, kapil@hrc.du.ac.in}
	\address{\llap{\,} Department of Mathematics\acr
		Delhi Technological University\acr
		Delhi-110042\acr
		INDIA}
	\email{naokantdeo@dce.ac.in}
	
	\address{\llap{\,}Department of Mathematics\acr
		Miranda house \acr
		University of Delhi\acr
		Delhi-110007\acr
		INDIA}
	\email{durvesh.kv.du@gmail.com}
	
	
	
	\subjclass[2010]{41A25, 41A30, 41A36, 26A15.} 
	\keywords{Laguerre polynomials, P\u{a}lt\u{a}nea basis, point-wise convergence, modulus of continuity, asymptotic formula, Lipschitz class, Weighted spaces.}
	
	\begin{abstract}
 This paper offers a newly created integral approach for operators employing the orthogonal modified Laguerre polynomials and P\u{a}lt\u{a}nea basis. These operators approximate the functions over the interval $[0,\infty)$. Further, the moments are established for the proposed operators, and the universal Korovkin's theorem is used to derive the approximation properties of the operators. We examine convergence using a variety of analytical methods, including the Lipschitz class, Peetre's K-functional, the second-order modulus of smoothness, and the modulus of continuity. Moreover, an asymptotic formula associated with the Voronovskaja-type is established. The approximation is estimated through the weighted modulus of continuity, and convergence of the proposed operators in weighted spaces of functions is investigated as well. Ultimately, we employ numerical examples and visual representations to validate the theoretical findings.

	\end{abstract}
	
	\maketitle
	\section{Introduction}
For the weight function $\Phi^{(\alpha)}(x)=x^{\alpha}\exp{(-x)}, \alpha>-1$, the Laguerre polynomials $\mathcal{L}_{\kappa}^{(\alpha)}$ are orthogonal  over the interval $[0,\infty)$. These polynomials can be expressed by using the Rodrigues formula (cf.\cite{rainville1960special} p. 203-204]):\\
$$\mathcal{L}_{\kappa}^{(\alpha)}(x)=\frac{\Phi^{(-\alpha)}(-x)}{\kappa!}D^{k}(\Phi^{(k+\alpha)}(-x))=\sum_{\iota=0}^{\kappa}\frac{(-1)^{\iota}}{\iota!}\binom{\kappa+\alpha}{\kappa-\iota}x^{\iota}, \mbox{where}\;D=\frac{d}{dx}.$$  
$\mathcal{L}_{\kappa}^{(\alpha)}$ can be generated with the help of the following generating function:
\begin{equation*}
	(1-z)^{-\alpha-1}\exp{\left(-\frac{xz}{1-z}\right)}=\sum_{\kappa=0}^{\infty}\mathcal{L}_{\kappa}^{(\alpha)}(x)z^{\kappa}
\end{equation*}
Recently, Gupta (\cite{gupta2024new}) introduced the operators based on orthogonal modified Laguerre polynomials over the interval $[0,\infty)$. These operators are as follows:
\begin{equation*}
	P_{\eta,\kappa}^{\alpha}(\phi;x)=\exp{\left(\frac{-\eta x}{2}\right)}2^{-\alpha-1}\sum_{\kappa=0}^{\infty}2^{-\kappa}\mathcal{L}_{\kappa}^{(\alpha)}\left(\frac{-\eta x}{2}\right)\Phi\left(\frac{\kappa}{\eta}\right),\;\;\alpha>-1.
\end{equation*}
Let $\Phi:[0,\infty)\rightarrow\mathbb{R}$ be integrable function. For $\rho>0$ and $\beta>0$, P\u{a}lt\u{a}nea \cite{kapilkumar} defined the modification of Sz\'{a}sz-Mirakjan operators as\\
$$\mathcal{B}_{\rho,\kappa}^{(\beta)}(\Phi;x)=\exp{(-\rho x)}\left(\Phi(0)+\sum_{\kappa=1}^{\infty}\frac{(\rho x)^{\kappa}}{\kappa!}\int_{0}^{\infty}\frac{\rho\beta\exp{(-\rho\beta z)}(\rho \beta z)^{\kappa\beta-1}}{\Gamma(\kappa\beta)}\Phi(z)dz\right).$$\\
Cheney and Sharma \cite{cheney1964bernstein} proposed operators based on Laguerre polynomials that are intended to explore the variation-diminishing properties and convergence. {\"O}ks{\"u}zer et al. \cite{oksuzer2018rate} introduced B{\'e}zier variant of operators involving Laguerre polynomials and studied rate of convergence for these operators.  {\"O}zarslan \cite{ozarslan2007q} proposed the q -Laguerre type positive linear operators and examined the approximation properties of these operators. Deshwal et al. \cite{deshwal2018pointwise} constructed B{\'e}zier variant of an operator based on Laguerre polynomials. The author examined the degree of approximation and rate of convergence for these operators.
 Much effort has been done recently to integral type modification of positive linear operators applying P\u{a}lt\u{a}nea basis. Verma and Gupta \cite{verma2015approximation} proposed the Jakimovski–Leviatan–P\u{a}lt\u{a}nea operators. They estimated approximation properties for the operators. Ansari et. al \cite{ansari2019approximation} constructed  modified P\u{a}lt\u{a}nea operators with Gould–Hopper
polynomials and estimated the approximation properties by using the weighted modulus of continuity. Mishra and Deo \cite{mishra2024approximation} introduced Apostol-Genocchi-P\u{a}lt\u{a}nea operators and studied the asymptotic type results as the Voronovskaya theorem and quantitative Voronovskaya theorem.
To see more work relevant to this area, one may refer (\cite{agrawal2021approximation,mursaleen2019approximation,neer2017quantitative,kapmursaleen2019approximation,aral2016quantitative,kumar2022approximation,agratini2021approximation,arunkajla,biricik2024laguerre,moak1981q}).\\
Motivated by the work above, we introduce a new class of operators adopting modified Laguerre polynomials and P\u{a}lt\u{a}nea basis function over the interval $[0,\infty)$  as follows:\\
If $\Phi:[0,\infty)\rightarrow\mathbb{R}$ is integrable function, then
\begin{equation}\label{eqn1}
	\mathcal{R}_{\eta,\kappa}^{(\alpha,\beta)}(\Phi;x)=\exp{\left(\frac{-\eta x}{2}\right)}2^{-\alpha-1}\sum_{\kappa=0}^{\infty}2^{-\kappa}\mathcal{L}_{\kappa}^{(\alpha)}\left(\frac{-\eta x}{2}\right)\int_{0}^{\infty}\mathcal{I}_{\kappa,\eta}^{\beta}(z)\Phi(z)dz, 
\end{equation}
where $\mathcal{I}_{\kappa,\eta}^{\beta}(z)=\frac{\eta\beta\exp{(-\eta\beta z)}(\eta \beta z)^{\kappa\beta-1}}{\Gamma(\kappa\beta)}$ and $\beta>0$.\\
The main goal of this study is to create a new class of operators. This is an overview of the profile of the paper being presented, we constructed a newly integral type operator over the interval $[0,\infty)$. The moments associated with the proposed operators are addressed in section 2. In section 3, we used Peetre's K-functional, the known Voronovskaja type asymptotic formula, the modulus of continuity, the Lipschitz class, and the second-order modulus of smoothness to examine the convergence traits of the proposed operators. Section 4 discussed the convergence of the operators in weighted spaces and estimated the approximation by using the weighted modulus of continuity. We use both visual and numerical illustrations in the last section to support the results. 
\begin{lemma}\label{lemma1}
	\cite{kapilkumar}. For $\varrho_{\jmath}(z)=z^{\jmath}$ and $\jmath=0,1,2,\hdots$, we know
	\begin{align*}
		\int_{0}^{\infty}\mathcal{I}_{\kappa,\eta}^{\beta}(z)\varrho_{\jmath}(z)dz&=\frac{(\kappa\beta)_{\jmath}}{(\eta\beta)^{\jmath}},
	\end{align*}
	where $(x)_{\jmath}=\prod_{s=1}^{\jmath}(x+s-1), (x)_{0}=1.$
\end{lemma}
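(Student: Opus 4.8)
The plan is to compute the integral directly, exploiting the fact that $\mathcal{I}_{\kappa,\eta}^{\beta}$ is, up to normalization, a Gamma density with shape parameter $\kappa\beta$ and rate parameter $\eta\beta$; the asserted value is then simply its $\jmath$-th moment. First I would absorb the leading factor $\eta\beta$ into the power $(\eta\beta z)^{\kappa\beta-1}$ so as to rewrite the kernel in the consolidated form
$$\mathcal{I}_{\kappa,\eta}^{\beta}(z)=\frac{(\eta\beta)^{\kappa\beta}}{\Gamma(\kappa\beta)}\,z^{\kappa\beta-1}\exp(-\eta\beta z).$$
Multiplying by $\varrho_{\jmath}(z)=z^{\jmath}$ and integrating over $[0,\infty)$ then leaves the constant $(\eta\beta)^{\kappa\beta}/\Gamma(\kappa\beta)$ in front of the single integral $\int_{0}^{\infty}z^{\kappa\beta+\jmath-1}\exp(-\eta\beta z)\,dz$.

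Next I would perform the substitution $u=\eta\beta z$, which carries this integral to the Euler representation $\int_{0}^{\infty}u^{\kappa\beta+\jmath-1}\exp(-u)\,du=\Gamma(\kappa\beta+\jmath)$, at the cost of the Jacobian and the scaling factor $(\eta\beta)^{-(\kappa\beta+\jmath)}$. Gathering the powers of $\eta\beta$ collapses the expression to $\Gamma(\kappa\beta+\jmath)/\big[(\eta\beta)^{\jmath}\,\Gamma(\kappa\beta)\big]$. Convergence of the Euler integral is automatic as long as $\kappa\beta+\jmath>0$, which holds for every admissible index (here $\kappa\geq 1$, the case $\kappa=0$ being absorbed into the $\Phi(0)$ contribution of the operator).

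Finally, I would apply the functional equation of the Gamma function in its telescoped form, namely $\Gamma(\kappa\beta+\jmath)=(\kappa\beta)(\kappa\beta+1)\cdots(\kappa\beta+\jmath-1)\,\Gamma(\kappa\beta)=(\kappa\beta)_{\jmath}\,\Gamma(\kappa\beta)$, matching the product convention $(x)_{\jmath}=\prod_{s=1}^{\jmath}(x+s-1)$ fixed in the statement, so that the two Gamma factors cancel and the claimed identity $(\kappa\beta)_{\jmath}/(\eta\beta)^{\jmath}$ emerges. The entire argument is elementary; the only place demanding a little care is the exponent bookkeeping for the power of $\eta\beta$, where one must check that the factor $(\eta\beta)^{\kappa\beta}$ contributed by the normalization is exactly undone by the $(\eta\beta)^{-\kappa\beta}$ produced by the substitution, leaving the clean factor $(\eta\beta)^{-\jmath}$ and nothing more.
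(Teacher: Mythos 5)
Your proof is correct. The paper itself offers no proof of this lemma---it is quoted from the cited reference \cite{kapilkumar} as a known fact about the P\u{a}lt\u{a}nea kernel---and your argument (rewriting $\mathcal{I}_{\kappa,\eta}^{\beta}$ as a Gamma density with shape $\kappa\beta$ and rate $\eta\beta$, substituting $u=\eta\beta z$ to obtain $\Gamma(\kappa\beta+\jmath)/\bigl[(\eta\beta)^{\jmath}\Gamma(\kappa\beta)\bigr]$, and telescoping via $\Gamma(\kappa\beta+\jmath)=(\kappa\beta)_{\jmath}\Gamma(\kappa\beta)$) is exactly the standard computation that underlies it, with the exponent bookkeeping and the $\kappa\geq 1$ caveat handled correctly.
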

\begin{lemma}\label{lemma2}
	\cite{gupta2024new}. For the operators $P_{\eta,\kappa}^{\alpha}(\phi;x)$ and $\jmath=0,1,2,3,4$ , we get
	\begin{align*}
		P_{\eta,\kappa}^{\alpha}(\varrho_{0}(z);x)&=1,\\
		P_{\eta,\kappa}^{\alpha}(\varrho_{1}(z);x)&=x+\frac{1+\alpha}{\eta},\\
		P_{\eta,\kappa}^{\alpha}(\varrho_{2}(z);x)&=x^{2}+\frac{x(2\alpha\eta+5\eta)+\alpha^{2}+4\alpha+3}{\eta^{2}},\\
		P_{\eta,\kappa}^{\alpha}(\varrho_{3}(z);x)&=x^{3}+\frac{x^{2}(3\alpha\eta^{2}+12\eta^{2})+x(3\alpha^{2}\eta+21\alpha\eta+31\eta)+\alpha^{3}+9\alpha^{2}+21\alpha+13}{\eta^{3}},\\
		P_{\eta,\kappa}^{\alpha}(\varrho_{4}(z);x)&=x^{4}+\frac{x^{3}(4\alpha\eta^{3}+22\eta^{3})+x^{2}(6\alpha^{2}\eta^{2}+60\alpha\eta^{2}+133\eta^{2})}{\eta^{4}}\\
		&\quad+\frac{x(4\alpha^{3}\eta+54\alpha^{2}\eta+208\alpha\eta+233\eta)+\alpha^{4}+16\alpha^{3}+78\alpha^{2}+138\alpha+75}{\eta^{4}}.
	\end{align*}
	\begin{lemma}\label{lemma3}
		For $\varrho_{\jmath}(z)=z^{\jmath}$ and $\jmath=0,1,2,3,4$, then
		\begin{align*}
			\mathcal{R}_{\eta,\kappa}^{(\alpha,\beta)}(\varrho_{0}(z);x)&=1,\\
			\mathcal{R}_{\eta,\kappa}^{(\alpha,\beta)}(\varrho_{1}(z);x)&=x+\frac{1+\alpha}{\eta},\\
			\mathcal{R}_{\eta,\kappa}^{(\alpha,\beta)}(\varrho_{2}(z);x)&=x^{2}+\frac{x(2\alpha\beta\eta+5\beta\eta+\eta)+\beta(\alpha^{2}+4\alpha+3)+\alpha+1}{\eta^{2}\beta},\\
			\mathcal{R}_{\eta,\kappa}^{(\alpha,\beta)}(\varrho_{3}(z);x)&=x^{3}+\frac{x^{2}(3\alpha\beta^{2}\eta^{2}+12\beta^{2}\eta^{2}+3\beta\eta^{2})+x(3\alpha^{2}\beta^{2}\eta+21\alpha\beta^{2}\eta+31\beta^{2}\eta+6\alpha\beta\eta+15\beta\eta+2\eta)}{\eta^{3}\beta^{2}},\\
			&\quad+\frac{\alpha^{3}\beta^{2}+9\alpha^{2}\beta^{2}+21\alpha\beta^{2}+13\beta^{2}+3\alpha^{2}\beta+12\alpha\beta+9\beta+2\alpha+2}{\eta^{3}\beta^{2}},\\
			\mathcal{R}_{\eta,\kappa}^{(\alpha,\beta)}(\varrho_{4}(z);x)&=x^{4}+\frac{x^{3}(4\alpha\beta^{3}\eta^{3}+6\beta^{2}\eta^{3}+22\beta^{3}\eta^{3})}{\eta^{4}\beta^{3}}\\
			&\quad+\frac{x^{2}(11\beta\eta^{2}+72\beta^{2}\eta^{2}+18\alpha\beta^{2}\eta^{2}+133\beta^{3}\eta^{2}+60\alpha\beta^{3}\eta^{2}+6\alpha^{2}\beta^{3}\eta^{2})}{\eta^{4}\beta^{3}}\\
			&\quad+\frac{x(4\alpha^{3}\beta^{3}\eta+54\alpha^{2}\beta^{3}\eta+208\alpha\beta^{3}\eta+233\beta^{3}\eta+18\alpha^{2}\beta^{2}\eta+126\alpha\beta^{2}\eta)}{\eta^{4}\beta^{3}}\\
			&\quad+\frac{x(186\beta^{2}\eta+22\alpha\beta\eta+55\beta\eta+6\eta)}{\eta^{4}\beta^{3}}+\frac{\alpha^{4}\beta^{3}+16\alpha^{3}\beta^{3}+78\alpha^{2}\beta^{3}+138\alpha\beta^{3}}{\eta^{4}\beta^{3}}\\
			&\quad+\frac{75\beta^{3}+6\alpha^{3}\beta^{2}+54\alpha^{2}\beta^{2}+126\alpha\beta^{2}+78\beta^{2}+11\alpha^{2}\beta+44\alpha\beta+33\beta+6\alpha+6}{\eta^{4}\beta^{3}}.
		\end{align*}
	\end{lemma}
\end{lemma}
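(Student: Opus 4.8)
The plan is to reduce every moment of $\mathcal{R}_{\eta,\kappa}^{(\alpha,\beta)}$ to the already-known moments of $P_{\eta,\kappa}^{\alpha}$, exploiting that the two operators share the same outer Laguerre series and differ only in that $\mathcal{R}_{\eta,\kappa}^{(\alpha,\beta)}$ replaces the point evaluation at $\kappa/\eta$ by integration against the P\u{a}lt\u{a}nea kernel $\mathcal{I}_{\kappa,\eta}^{\beta}$. First I would apply $\mathcal{R}_{\eta,\kappa}^{(\alpha,\beta)}$ to $\varrho_{\jmath}(z)=z^{\jmath}$ directly from the definition \eqref{eqn1} and carry the monomial inside the integral, so that the inner factor becomes $\int_{0}^{\infty}\mathcal{I}_{\kappa,\eta}^{\beta}(z)z^{\jmath}\,dz$. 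By Lemma~\ref{lemma1} this equals $(\kappa\beta)_{\jmath}/(\eta\beta)^{\jmath}$, whence
\begin{equation*}
	\mathcal{R}_{\eta,\kappa}^{(\alpha,\beta)}(\varrho_{\jmath};x)=\frac{1}{(\eta\beta)^{\jmath}}\exp\left(\frac{-\eta x}{2}\right)2^{-\alpha-1}\sum_{\kappa=0}^{\infty}2^{-\kappa}\mathcal{L}_{\kappa}^{(\alpha)}\left(\frac{-\eta x}{2}\right)(\kappa\beta)_{\jmath}.
\end{equation*}

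Next I would expand the rising factorial $(\kappa\beta)_{\jmath}=\prod_{s=1}^{\jmath}(\kappa\beta+s-1)$ as a polynomial in $\kappa$, writing $(\kappa\beta)_{\jmath}=\sum_{m=0}^{\jmath}a_{\jmath,m}\beta^{m}\kappa^{m}$ with nonnegative integer coefficients $a_{\jmath,m}$ (the unsigned Stirling numbers of the first kind). The crucial observation is that, for each fixed $m$,
\begin{equation*}
	\exp\left(\frac{-\eta x}{2}\right)2^{-\alpha-1}\sum_{\kappa=0}^{\infty}2^{-\kappa}\mathcal{L}_{\kappa}^{(\alpha)}\left(\frac{-\eta x}{2}\right)\kappa^{m}=\eta^{m}\,P_{\eta,\kappa}^{\alpha}(\varrho_{m};x),
\end{equation*}
because $P_{\eta,\kappa}^{\alpha}$ evaluates its test function at $\kappa/\eta$, so that $\varrho_{m}(\kappa/\eta)=\kappa^{m}\eta^{-m}$. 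Combining the two displays gives the master identity
\begin{equation*}
	\mathcal{R}_{\eta,\kappa}^{(\alpha,\beta)}(\varrho_{\jmath};x)=\frac{1}{(\eta\beta)^{\jmath}}\sum_{m=0}^{\jmath}a_{\jmath,m}(\beta\eta)^{m}\,P_{\eta,\kappa}^{\alpha}(\varrho_{m};x),
\end{equation*}
which exhibits each moment of $\mathcal{R}_{\eta,\kappa}^{(\alpha,\beta)}$ as an explicit linear combination of the moments recorded in Lemma~\ref{lemma2}.

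The cases $\jmath=0,1$ are immediate, since $a_{0,0}=a_{1,1}=1$ force $\mathcal{R}_{\eta,\kappa}^{(\alpha,\beta)}(\varrho_{0};x)=P_{\eta,\kappa}^{\alpha}(\varrho_{0};x)=1$ and $\mathcal{R}_{\eta,\kappa}^{(\alpha,\beta)}(\varrho_{1};x)=P_{\eta,\kappa}^{\alpha}(\varrho_{1};x)$. For $\jmath=2,3,4$ I would substitute the explicit expansions $(\kappa\beta)_{2}=\beta^{2}\kappa^{2}+\beta\kappa$, $(\kappa\beta)_{3}=\beta^{3}\kappa^{3}+3\beta^{2}\kappa^{2}+2\beta\kappa$, and $(\kappa\beta)_{4}=\beta^{4}\kappa^{4}+6\beta^{3}\kappa^{3}+11\beta^{2}\kappa^{2}+6\beta\kappa$ into the master identity and insert the corresponding rows of Lemma~\ref{lemma2}. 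The remaining task is to place everything over the common denominator $\eta^{\jmath}\beta^{\jmath-1}$ and collect coefficients of like powers of $x$, $\alpha$, and $\beta$. I expect this bookkeeping, rather than any conceptual difficulty, to be the main obstacle: the case $\jmath=4$ requires correctly merging four moment formulas, several of them carrying multiple $\alpha$-dependent terms, and a single coefficient slip propagates through the entire final rational expression. A careful term-by-term comparison against the claimed formulas completes the proof.
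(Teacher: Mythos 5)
Your proposal is correct and follows essentially the same route as the paper, whose entire proof is the remark that the moments follow ``easily'' from Lemma~\ref{lemma1} and Lemma~\ref{lemma2}: you reduce the inner integral via Lemma~\ref{lemma1}, expand the rising factorial $(\kappa\beta)_{\jmath}$ in powers of $\kappa$, and recognize the resulting sums as $\eta^{m}P_{\eta,\kappa}^{\alpha}(\varrho_{m};x)$, which is precisely the computation the paper leaves implicit. Your master identity $\mathcal{R}_{\eta,\kappa}^{(\alpha,\beta)}(\varrho_{\jmath};x)=(\eta\beta)^{-\jmath}\sum_{m=0}^{\jmath}a_{\jmath,m}(\beta\eta)^{m}P_{\eta,\kappa}^{\alpha}(\varrho_{m};x)$ with the stated Stirling-type coefficients does reproduce the formulas of Lemma~\ref{lemma3} exactly (I verified the cases $\jmath=2,3$ and the leading terms of $\jmath=4$), so the only remaining work is the bookkeeping you describe.
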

\begin{proof}
	By Lemma (\ref{lemma1}) and  Lemma (\ref{lemma2}), we can easily calculate following moments for the proposed operators.
\end{proof}
\begin{lemma}\label{lemma4}
	Let $\mu_{\eta,\jmath}(x):=\mathcal{R}_{\eta,\kappa}^{(\alpha,\beta)}((\varrho_{1}(z)-x\varrho_{0}(z))^{\jmath};x)$ and $\jmath=1,2,4$, we calculate easily by Lemma (\ref{lemma3}), we get
	\begin{align*}
		\mu_{\eta,1}(x)&= \frac{1+\alpha}{\eta},\\
		\mu_{\eta,2}(x)&=\frac{x(3\beta+1)}{\beta\eta}+\frac{\alpha^{2}\beta+4\alpha\beta+3\beta+\alpha+1}{\beta\eta^{2}},\\
		\mu_{\eta,4}(x)&=\frac{3x^{2}(9\beta^{2}+6\beta+1)}{\beta^{2}\eta^{2}}+\frac{x(124\alpha\beta{3}+181\beta^{3}+6\alpha^{2}\beta^{2}+78\alpha\beta^{2}+150\beta^{2}+14\alpha\beta+47\beta+6)}{\beta^{3}\eta^{3}}\\
		&\quad+ \frac{\alpha^{4}\beta{3}+16\alpha^{3}\beta^{3}+78\alpha^{2}\beta^{3}+138\alpha\beta^{3}+75\beta^{3}+6\alpha^{3}\beta^{2}+54\alpha^{2}\beta^{2}+126\alpha\beta^{2}+78\beta^{2}}{\beta^{3}\eta^{4}}\\
		&\quad+\frac{11\alpha^{2}\beta+44\alpha\beta+33\beta+6\alpha+6}{\beta^{3}\eta^{4}}.
	\end{align*}
\end{lemma}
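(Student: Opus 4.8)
The plan is to exploit the linearity of $\mathcal{R}_{\eta,\kappa}^{(\alpha,\beta)}$ together with the binomial theorem, reducing each central moment to a finite combination of the raw moments already computed in Lemma \ref{lemma3}. Concretely, for each $\jmath\in\{1,2,4\}$ I would first expand
\begin{equation*}
	(\varrho_{1}(z)-x\varrho_{0}(z))^{\jmath}=(z-x)^{\jmath}=\sum_{s=0}^{\jmath}\binom{\jmath}{s}(-x)^{\jmath-s}\varrho_{s}(z),
\end{equation*}
and then apply the operator term by term, so that
\begin{equation*}
	\mu_{\eta,\jmath}(x)=\sum_{s=0}^{\jmath}\binom{\jmath}{s}(-x)^{\jmath-s}\,\mathcal{R}_{\eta,\kappa}^{(\alpha,\beta)}(\varrho_{s}(z);x).
\end{equation*}
The entire computation then rests on substituting the explicit values of $\mathcal{R}_{\eta,\kappa}^{(\alpha,\beta)}(\varrho_{s}(z);x)$, $s=0,1,2,3,4$, from Lemma \ref{lemma3} and collecting terms by powers of $x$.

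The cases $\jmath=1$ and $\jmath=2$ are short. For $\jmath=1$ the expansion is $\mathcal{R}_{\eta,\kappa}^{(\alpha,\beta)}(\varrho_{1};x)-x$, so the leading monomial $x$ cancels at once and leaves $\mu_{\eta,1}(x)=(1+\alpha)/\eta$. For $\jmath=2$ the expansion $\mathcal{R}_{\eta,\kappa}^{(\alpha,\beta)}(\varrho_{2};x)-2x\,\mathcal{R}_{\eta,\kappa}^{(\alpha,\beta)}(\varrho_{1};x)+x^{2}$ makes the $x^{2}$ contributions ($1-2+1$) cancel; gathering the coefficient of $x$ over the denominator $\beta\eta$ yields $(3\beta+1)/(\beta\eta)$, and the constant term is inherited directly from $\mathcal{R}_{\eta,\kappa}^{(\alpha,\beta)}(\varrho_{2};x)$, giving the stated formula.

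The only genuinely laborious step is $\jmath=4$, where
\begin{equation*}
	\mu_{\eta,4}(x)=\mathcal{R}_{\eta,\kappa}^{(\alpha,\beta)}(\varrho_{4};x)-4x\,\mathcal{R}_{\eta,\kappa}^{(\alpha,\beta)}(\varrho_{3};x)+6x^{2}\,\mathcal{R}_{\eta,\kappa}^{(\alpha,\beta)}(\varrho_{2};x)-4x^{3}\,\mathcal{R}_{\eta,\kappa}^{(\alpha,\beta)}(\varrho_{1};x)+x^{4}.
\end{equation*}
Here I would first check the built-in sanity conditions that the coefficients of $x^{4}$ (namely $1-4+6-4+1$) and of $x^{3}$ vanish identically, consistent with the fact that the highest-order term $x^{\jmath}$ of each raw moment reproduces the monomial; this leaves a polynomial in $x$ of degree two, as recorded in the statement. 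The surviving coefficients of $x^{2}$, $x$, and the constant are then assembled over the denominators $\beta^{2}\eta^{2}$, $\beta^{3}\eta^{3}$, and $\beta^{3}\eta^{4}$ respectively. The main obstacle is purely organizational: tracking the many mixed powers of $\alpha$ and $\beta$ arising from the degree-three and degree-four raw moments without transcription or sign errors. No analytic input beyond linearity and the binomial expansion is required.
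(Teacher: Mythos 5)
Your plan is exactly the route the paper itself takes (its entire proof is the phrase ``we calculate easily by Lemma~\ref{lemma3}''): expand $(z-x)^{\jmath}$ by the binomial theorem, use linearity, and substitute the raw moments of Lemma~\ref{lemma3}. Your $\jmath=1$ and $\jmath=2$ computations are correct, and your sanity checks that the $x^{4}$ and $x^{3}$ coefficients cancel for $\jmath=4$ do hold.

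However, if you carry out the $\jmath=4$ bookkeeping you describe, you will \emph{not} reproduce the formula in the statement, because the lemma as printed is missing a term. The coefficient of $x$ in $\mu_{\eta,4}(x)$ receives contributions only from the $x$-coefficient of $\mathcal{R}_{\eta,\kappa}^{(\alpha,\beta)}(\varrho_{4};x)$ and $-4$ times the constant term of $\mathcal{R}_{\eta,\kappa}^{(\alpha,\beta)}(\varrho_{3};x)$; using Lemma~\ref{lemma3} this equals
\begin{align*}
	&\frac{(4\alpha^{3}+54\alpha^{2}+208\alpha+233)\beta^{3}+(18\alpha^{2}+126\alpha+186)\beta^{2}+(22\alpha+55)\beta+6}{\beta^{3}\eta^{3}}\\
	&\quad-\frac{4\bigl[(\alpha^{3}+9\alpha^{2}+21\alpha+13)\beta^{3}+(3\alpha^{2}+12\alpha+9)\beta^{2}+(2\alpha+2)\beta\bigr]}{\beta^{3}\eta^{3}}\\
	&\quad=\frac{(18\alpha^{2}+124\alpha+181)\beta^{3}+(6\alpha^{2}+78\alpha+150)\beta^{2}+(14\alpha+47)\beta+6}{\beta^{3}\eta^{3}},
\end{align*}
so the printed numerator $124\alpha\beta^{3}+181\beta^{3}+\cdots$ omits the term $18\alpha^{2}\beta^{3}$. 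A quick numerical check at $\alpha=\beta=1$ confirms this: the $x$-coefficient must equal $(912-4\cdot 72)/\eta^{3}=624/\eta^{3}$, while the stated formula gives only $606/\eta^{3}$; the gap is exactly $18$. (The $x^{2}$-coefficient and the constant term in the statement are consistent with Lemma~\ref{lemma3}; also, the omission is harmless for the rest of the paper, which only uses $\mu_{\eta,2}$ explicitly and the order $\mu_{\eta,4}(x)=O(\eta^{-2})$.) So your method is the right one, but your final assertion that the assembled coefficients agree with the statement should be replaced by the corrected coefficient above; as you yourself anticipated, the only danger here is transcription error, and it is the paper, not your approach, that fell into it.
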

With a normed space $\mathcal{C}_{[0,\infty)}^{\mathcal{B}}=\{\Phi\in \mathcal{C}[0,\infty): \Phi\;\mbox{is bounded over}\;[0,\infty)\}$  that has the norm
\begin{equation}\label{eqn2}
	\|\Phi\|=\sup_{x\in[0,\infty)}|\Phi(x)|
\end{equation}
\begin{lemma}\label{lemma5}
	For each $\Phi\in \mathcal{C}_{[0,\infty)}^{\mathcal{B}}$ and $x\in[0,\infty)$, then
	\begin{equation}\label{eqn3}
		| \mathcal{R}_{\eta,\kappa}^{(\alpha,\beta)}(\Phi;x)|\leq\|\Phi\|.
	\end{equation}
\end{lemma}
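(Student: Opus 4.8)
The plan is to recognize that $\mathcal{R}_{\eta,\kappa}^{(\alpha,\beta)}$ is a \emph{positive} linear operator reproducing constants, and then to invoke the standard monotonicity argument. First I would verify positivity of every factor appearing in the defining series \eqref{eqn1}. The prefactor $\exp(-\eta x/2)$ and the constant $2^{-\alpha-1}$ are positive, and each weight $2^{-\kappa}$ is positive. The inner kernel $\mathcal{I}_{\kappa,\eta}^{\beta}(z)=\eta\beta\exp(-\eta\beta z)(\eta\beta z)^{\kappa\beta-1}/\Gamma(\kappa\beta)$ is nonnegative for $z\geq 0$, being (up to normalization) a Gamma density. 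The only factor whose sign is not immediate is the Laguerre polynomial evaluated at the negative argument $-\eta x/2$.

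Second, I would use the explicit expansion from the Rodrigues formula to settle the sign of $\mathcal{L}_{\kappa}^{(\alpha)}(-\eta x/2)$. Substituting a nonpositive argument $y=-\eta x/2$ into
$$\mathcal{L}_{\kappa}^{(\alpha)}(y)=\sum_{\iota=0}^{\kappa}\frac{(-1)^{\iota}}{\iota!}\binom{\kappa+\alpha}{\kappa-\iota}y^{\iota}$$
turns every summand into $\frac{1}{\iota!}\binom{\kappa+\alpha}{\kappa-\iota}(\eta x/2)^{\iota}$, since the sign $(-1)^{\iota}$ is exactly cancelled by $y^{\iota}=(-1)^{\iota}(\eta x/2)^{\iota}$. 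Because $\alpha>-1$ forces the generalized binomial coefficients $\binom{\kappa+\alpha}{\kappa-\iota}$ to be nonnegative, we obtain $\mathcal{L}_{\kappa}^{(\alpha)}(-\eta x/2)\geq 0$ for all $x\geq 0$. Consequently $\mathcal{R}_{\eta,\kappa}^{(\alpha,\beta)}$ sends nonnegative integrands to nonnegative values, i.e. it is a positive operator.

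Finally, I would combine positivity with the normalization $\mathcal{R}_{\eta,\kappa}^{(\alpha,\beta)}(\varrho_{0}(z);x)=1$ already recorded in Lemma \ref{lemma3}. Since $-\|\Phi\|\leq\Phi(z)\leq\|\Phi\|$ for every $z\in[0,\infty)$, monotonicity and linearity yield
$$\left|\mathcal{R}_{\eta,\kappa}^{(\alpha,\beta)}(\Phi;x)\right|\leq\mathcal{R}_{\eta,\kappa}^{(\alpha,\beta)}(|\Phi|;x)\leq\|\Phi\|\,\mathcal{R}_{\eta,\kappa}^{(\alpha,\beta)}(\varrho_{0}(z);x)=\|\Phi\|,$$
which is precisely \eqref{eqn3}. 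The main obstacle here is really confined to the second step: the nonnegativity of the Laguerre factor at the shifted negative argument. Once that sign analysis is in place, the estimate is the routine fact that a positive linear operator fixing constants is a contraction for the supremum norm \eqref{eqn2}.
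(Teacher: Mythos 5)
Your proof is correct and complete: the sign analysis showing $\mathcal{L}_{\kappa}^{(\alpha)}(-\eta x/2)\geq 0$ for $\alpha>-1$ and $x\geq 0$ (the $(-1)^{\iota}$ cancelling against the nonpositive argument, with nonnegative generalized binomial coefficients) is valid, and combining the resulting positivity of $\mathcal{R}_{\eta,\kappa}^{(\alpha,\beta)}$ with the constant reproduction $\mathcal{R}_{\eta,\kappa}^{(\alpha,\beta)}(\varrho_{0};x)=1$ from Lemma \ref{lemma3} yields \eqref{eqn3}. The paper states this lemma with no proof at all, and your argument is exactly the standard positivity-plus-normalization reasoning the authors implicitly rely on, so your write-up simply supplies the omitted details.
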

\section{Direct Result and Asymptotic Formula}
\begin{theorem}
	Let $\Phi\in\mathcal{C}_{[0,\infty)}^{\mathcal{B}}$. Then, $\lim\limits_{\eta\rightarrow\infty}\mathcal{R}_{\eta,\kappa}^{(\alpha,\beta)}(\Phi;x)=\Phi(x)$ holds uniformly on compact subsets of $[0,\infty)$.
	\begin{proof}
		By Lemma \ref{lemma3}, we can easily see that $\lim\limits_{\eta\rightarrow\infty}\mathcal{R}_{\eta,\kappa}^{(\alpha,\beta)}(\varrho_{\jmath}(z)=z^{\jmath};x)=x^{\jmath}$  for $\jmath=0,1,2,3,4$ and hence the well known Bohman-Korovkin's theorem due to \cite{korovkin1953convergence}, operators $\mathcal{R}_{\eta,\kappa}^{(\alpha,\beta)}$ converge uniformaly on every compact subset of $[0,\infty)$ to $\Phi(x)$.
	\end{proof}
\end{theorem}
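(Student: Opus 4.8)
The plan is to recognize the statement as an instance of the Bohman--Korovkin theorem and to reduce it to verifying the hypotheses of that theorem for the three test functions $\varrho_0,\varrho_1,\varrho_2$. Before invoking Korovkin, I would first confirm that each $\mathcal{R}_{\eta,\kappa}^{(\alpha,\beta)}$ is a \emph{positive} linear operator on $\mathcal{C}_{[0,\infty)}^{\mathcal{B}}$. Linearity is immediate from the definition \eqref{eqn1}. For positivity, note that for $x\ge 0$ the Laguerre argument $-\eta x/2$ is non-positive, and the Rodrigues expansion gives $\mathcal{L}_\kappa^{(\alpha)}(-s)=\sum_{\iota=0}^\kappa \frac{1}{\iota!}\binom{\kappa+\alpha}{\kappa-\iota}s^\iota$, whose coefficients $\binom{\kappa+\alpha}{\kappa-\iota}=\frac{\Gamma(\kappa+\alpha+1)}{\Gamma(\kappa-\iota+1)\Gamma(\alpha+\iota+1)}$ are all strictly positive for $\alpha>-1$; hence $\mathcal{L}_\kappa^{(\alpha)}(-\eta x/2)\ge 0$. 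Combined with $\mathcal{I}_{\kappa,\eta}^{\beta}(z)\ge 0$ and the positive exponential prefactor, this yields $\mathcal{R}_{\eta,\kappa}^{(\alpha,\beta)}(\Phi;x)\ge 0$ whenever $\Phi\ge 0$.

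Next I would read off the moment limits directly from Lemma \ref{lemma3}. Each moment $\mathcal{R}_{\eta,\kappa}^{(\alpha,\beta)}(\varrho_\jmath;x)$ equals $x^\jmath$ plus a finite sum of terms of the form (polynomial in $x$)$/\eta^{m}$ with $m\ge 1$. Consequently, for fixed $x$ we have $\mathcal{R}_{\eta,\kappa}^{(\alpha,\beta)}(\varrho_\jmath;x)\to x^\jmath$ as $\eta\to\infty$; moreover, since each correction term is a polynomial in $x$ divided by a positive power of $\eta$, this convergence is uniform on every compact subset $[0,b]\subset[0,\infty)$. The cases $\jmath=0,1,2$ are the ones required by Korovkin, while $\jmath=3,4$ are recorded for the later Voronovskaja-type and smoothness estimates.

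Finally I would apply the Bohman--Korovkin theorem in its localized form for positive linear operators on $\mathcal{C}_{[0,\infty)}^{\mathcal{B}}$: having established uniform convergence on compacta for $\varrho_0,\varrho_1,\varrho_2$, the theorem delivers $\mathcal{R}_{\eta,\kappa}^{(\alpha,\beta)}(\Phi;x)\to\Phi(x)$ uniformly on every compact subset, for all $\Phi\in\mathcal{C}_{[0,\infty)}^{\mathcal{B}}$.

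The main obstacle is the unboundedness of the domain: the classical Korovkin theorem is stated on a compact interval, so one must invoke the localized version that concludes uniform convergence only on compact subsets. The standard justification of that localized version---splitting the estimate of $|\mathcal{R}_{\eta,\kappa}^{(\alpha,\beta)}(\Phi;x)-\Phi(x)|$ into a near region controlled by the uniform continuity of $\Phi$ on $[0,b]$ and a far region controlled, via a Chebyshev-type inequality, by the second central moment $\mu_{\eta,2}(x)$ from Lemma \ref{lemma4}---is precisely where the boundedness of $\Phi$ and the vanishing of $\mu_{\eta,2}$ on compacta enter. Once this template is in place, the proof reduces to the routine verification already handled by Lemma \ref{lemma3}.
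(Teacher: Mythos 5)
Your proposal is correct and follows essentially the same route as the paper: read the moment limits off Lemma \ref{lemma3} and invoke the Bohman--Korovkin theorem to obtain uniform convergence on compact subsets of $[0,\infty)$. The extra details you supply---verifying positivity via the sign of the Laguerre coefficients at non-positive arguments, and sketching the localized Korovkin argument for the unbounded domain---are sound refinements of the same argument rather than a different approach.
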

In this section, we discuss now explore some important results.\\
For $\Phi\in \mathcal{C}_{[0,\infty)}^{\mathcal{B}}$ , the Peetre's K-functional is given by
\begin{equation*}
	\displaystyle K_{2}(\Phi,\delta)=\inf_{\psi\in W_{C^{2}([0,\infty))}}\{\|\Phi-\psi\|+\delta\|\psi^{''}\|\},\; \mbox{where}\;\delta>0,\;\mbox{and}
\end{equation*}
\begin{equation*}
	W_{C^{2}([0,\infty))}=\{\psi\in \mathcal{C}_{[0,\infty)}^{\mathcal{B}}:\psi^{'},\psi^{''}\in \mathcal{C}_{[0,\infty)}^{\mathcal{B}}\}.
\end{equation*}
\begin{equation}\label{eqn4}
	K_{2}(\Phi,\delta)\leq\mathfrak{M}\omega_{2}(\Phi,\sqrt{\delta}),
\end{equation}
\begin{equation*}
	\mbox{where}\; \omega_{2}(\Phi,\sqrt{\delta})=\sup_{0<\epsilon\leq\sqrt{\delta}}\left(\sup_{x,x+\epsilon,x+2\epsilon\in [0,\infty)}|\Phi(x+2\epsilon)-2\Phi(x+\epsilon)+\Phi(x)|\right)
\end{equation*}
is called the second order of modulus of continuity of $\Phi$.
The expression
\begin{equation}\label{eqn5}
	\omega(\Phi,\delta)=\displaystyle\sup_{|z-x|\leq\delta}|\Phi(z)-\Phi(x)|
\end{equation}
is modulus of continuity of $\Phi$, where $x,z\in[0,\infty)$.
\begin{theorem}
	For $\Phi\in \mathcal{C}_{[0,\infty)}^{\mathcal{B}}$ and $x\in[0,\infty)$, then
	\begin{equation*}
		|\mathcal{R}_{\eta,\kappa}^{(\alpha,\beta)}(\Phi;x)-\Phi(x)|
		\leq \mathfrak{M}\omega_{2}(\Phi,\sqrt{\delta/2})+\omega\left(\Phi,\frac{1+\alpha}{\eta}\right),
	\end{equation*} where $\mathfrak{M}>0$ is constant and $\delta=\mu_{\eta,2}(x)+\left(\frac{1+\alpha}{\eta}\right)^{2}$.
\end{theorem}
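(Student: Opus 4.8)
The plan is to exploit the fact that the operators $\mathcal{R}_{\eta,\kappa}^{(\alpha,\beta)}$ reproduce constants but \emph{not} linear functions — indeed Lemma \ref{lemma3} gives $\mathcal{R}_{\eta,\kappa}^{(\alpha,\beta)}(\varrho_{1};x)=x+\tfrac{1+\alpha}{\eta}$, so the first central moment $\mu_{\eta,1}(x)=\tfrac{1+\alpha}{\eta}$ is nonzero. The standard remedy is to pass through an auxiliary operator that does reproduce linear functions. First I would define
\begin{equation*}
	\widetilde{\mathcal{R}}_{\eta,\kappa}^{(\alpha,\beta)}(\Phi;x)=\mathcal{R}_{\eta,\kappa}^{(\alpha,\beta)}(\Phi;x)-\Phi\!\left(x+\frac{1+\alpha}{\eta}\right)+\Phi(x).
\end{equation*}
Using $\mathcal{R}_{\eta,\kappa}^{(\alpha,\beta)}(\varrho_{0};x)=1$ and the value of $\mathcal{R}_{\eta,\kappa}^{(\alpha,\beta)}(\varrho_{1};x)$ from Lemma \ref{lemma3}, one checks directly that $\widetilde{\mathcal{R}}_{\eta,\kappa}^{(\alpha,\beta)}(\varrho_{0};x)=1$ and $\widetilde{\mathcal{R}}_{\eta,\kappa}^{(\alpha,\beta)}(\varrho_{1};x)=x$, i.e. the auxiliary operator fixes all affine functions. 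Combining Lemma \ref{lemma5} with the triangle inequality also yields the crude uniform bound $|\widetilde{\mathcal{R}}_{\eta,\kappa}^{(\alpha,\beta)}(\Phi;x)|\leq 3\|\Phi\|$, which I will need later.

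Next I would run the Peetre $K$-functional argument on $\widetilde{\mathcal{R}}_{\eta,\kappa}^{(\alpha,\beta)}$. For $\psi\in W_{C^{2}([0,\infty))}$ I expand $\psi(z)=\psi(x)+\psi'(x)(z-x)+\int_{x}^{z}(z-u)\psi''(u)\,du$ and apply $\widetilde{\mathcal{R}}_{\eta,\kappa}^{(\alpha,\beta)}$; since the constant and linear terms are reproduced, only the integral remainder survives, giving
\begin{equation*}
	\widetilde{\mathcal{R}}_{\eta,\kappa}^{(\alpha,\beta)}(\psi;x)-\psi(x)=\mathcal{R}_{\eta,\kappa}^{(\alpha,\beta)}(g;x)-g\!\left(x+\frac{1+\alpha}{\eta}\right),\qquad g(z)=\int_{x}^{z}(z-u)\psi''(u)\,du .
\end{equation*}
Bounding $|g(z)|\leq\tfrac12\|\psi''\|(z-x)^{2}$ and applying the operator to $(z-x)^{2}$ turns the first term into $\tfrac12\|\psi''\|\,\mu_{\eta,2}(x)$, while the point evaluation at $x+\tfrac{1+\alpha}{\eta}$ contributes $\tfrac12\|\psi''\|\bigl(\tfrac{1+\alpha}{\eta}\bigr)^{2}$. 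Adding these gives exactly $|\widetilde{\mathcal{R}}_{\eta,\kappa}^{(\alpha,\beta)}(\psi;x)-\psi(x)|\leq\tfrac{\delta}{2}\|\psi''\|$ with $\delta=\mu_{\eta,2}(x)+\bigl(\tfrac{1+\alpha}{\eta}\bigr)^{2}$, which is the source of the two pieces of $\delta$ in the statement.

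To finish, for arbitrary $\Phi$ and any $\psi\in W_{C^{2}([0,\infty))}$ I write $\Phi=(\Phi-\psi)+\psi$ and use the $3\|\cdot\|$ bound on $\widetilde{\mathcal{R}}_{\eta,\kappa}^{(\alpha,\beta)}(\Phi-\psi;x)$ together with the previous estimate to get $|\widetilde{\mathcal{R}}_{\eta,\kappa}^{(\alpha,\beta)}(\Phi;x)-\Phi(x)|\leq 4\|\Phi-\psi\|+\tfrac{\delta}{2}\|\psi''\|$. Taking the infimum over $\psi$ produces a constant multiple of $K_{2}(\Phi,\delta/2)$, and the definition of the $K$-functional together with \eqref{eqn4} converts this into $\mathfrak{M}\,\omega_{2}(\Phi,\sqrt{\delta/2})$ after absorbing all numerical constants into $\mathfrak{M}$. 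Finally, undoing the auxiliary substitution gives
\begin{equation*}
	|\mathcal{R}_{\eta,\kappa}^{(\alpha,\beta)}(\Phi;x)-\Phi(x)|\leq |\widetilde{\mathcal{R}}_{\eta,\kappa}^{(\alpha,\beta)}(\Phi;x)-\Phi(x)|+\left|\Phi\!\left(x+\frac{1+\alpha}{\eta}\right)-\Phi(x)\right|,
\end{equation*}
where the last difference is at most $\omega\bigl(\Phi,\tfrac{1+\alpha}{\eta}\bigr)$ by \eqref{eqn5}, yielding the claimed inequality. The main obstacle is bookkeeping rather than conceptual: one must correctly split the Taylor remainder across the three constituents of $\widetilde{\mathcal{R}}_{\eta,\kappa}^{(\alpha,\beta)}$ so that precisely $\mu_{\eta,2}(x)$ and $\bigl(\tfrac{1+\alpha}{\eta}\bigr)^{2}$ reassemble into $\delta$, and track the numerical factors so that the argument of $\omega_{2}$ lands on $\sqrt{\delta/2}$.
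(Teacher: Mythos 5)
Your proposal is correct and follows essentially the same route as the paper: the auxiliary operator $\widetilde{\mathcal{R}}_{\eta,\kappa}^{(\alpha,\beta)}$ you define is exactly the paper's $\mathcal{H}_{\eta,\kappa}^{(\alpha,\beta)}$, and the Taylor-expansion/$K$-functional argument, the splitting of the remainder into $\mu_{\eta,2}(x)$ plus $\bigl(\tfrac{1+\alpha}{\eta}\bigr)^{2}$, and the final conversion via $K_{2}(\Phi,\delta/2)\leq\mathfrak{M}\,\omega_{2}(\Phi,\sqrt{\delta/2})$ all match the paper's proof. The only differences are harmless constant-tracking (you get $4\|\Phi-\psi\|+\tfrac{\delta}{2}\|\psi''\|$ where the paper gets $2\|\Phi-\psi\|+\delta\|\psi''\|$), and in fact your bookkeeping of the Taylor remainder is cleaner than the paper's, which carries a spurious factor $\tfrac12$ in its expansion.
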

\begin{proof}
	Firstly, we introduce
	\begin{equation}\label{eqn6}
		\mathcal{H}_{\eta,\kappa}^{(\alpha,\beta)}(\Phi;x)=\mathcal{R}_{\eta,\kappa}^{(\alpha,\beta)}(\Phi;x)-\Phi\left(x+\frac{1+\alpha}{\eta}\right)+\Phi(x).
	\end{equation}
	Let $\psi\in W_{C^{2}([0,\infty))}$, then by Taylor's theorem
	\begin{equation}\label{eqn7}
		\psi(z)=\psi(x)+(z-x)\psi^{'}(x)+\frac{1}{2}\int\limits_{x}^{z}(z-u)\psi^{''}(u)du.
	\end{equation}
	Applying $\mathcal{H}_{\eta,\kappa}^{(\alpha,\beta)}$ on (\ref{eqn7}),
	\begin{align*}
		\mathcal{H}_{\eta,\kappa}^{(\alpha,\beta)}(\psi;x)&=\psi(x)+\psi^{'}(x)\mathcal{H}_{\eta,\kappa}^{(\alpha,\beta)}(z-x;x)+\frac{1}{2}\mathcal{H}_{\eta,\kappa}^{(\alpha,\beta)}\left(\int\limits_{x}^{z}(z-u)\psi^{''}(u)du;x\right).
	\end{align*}
	By Lemma \ref{lemma4} and (\ref{eqn6}),
	\begin{equation*}
		\mathcal{H}_{\eta,\kappa}^{(\alpha,\beta)}(z-x;x)=0.
	\end{equation*}
	\begin{align*}
		\mbox{Therefore},\quad|\mathcal{H}_{\eta,\kappa}^{(\alpha,\beta)}(\psi;x)-\psi(x)|&\leq\mathcal{H}_{\eta,\kappa}^{(\alpha,\beta)}\left(\int\limits_{x}^{z}(z-u)|\psi^{''}(u)|du;x\right)\\
		&\leq\|\psi^{''}\|\left|\mathcal{H}_{\eta,\kappa}^{(\alpha,\beta)}\left(\int\limits_{x}^{z}(z-u)du;x\right)\right|.
	\end{align*}
	By (\ref{eqn6}),
	\begin{align*}
		|\mathcal{H}_{\eta,\kappa}^{(\alpha,\beta)}(\psi;x)-\psi(x)|&\leq\|\psi^{''}\|\left(\left|(\mathcal{R}_{\eta,\kappa}^{(\alpha,\beta)}\left(\int\limits_{x}^{z}(z-u)du;x\right)\right|+\left|\int_{x}^{x+\frac{1+\alpha}{\eta}}\left(x+\frac{1+\alpha}{\eta}-u\right)du\right|\right)
	\end{align*}
	\begin{equation}\label{eqn8}
		\leq\|\psi^{''}\|\left(\mu_{\eta,2}(x)+\left(\frac{1+\alpha}{\eta}\right)^{2}\right)=\delta\|\psi^{''}\|.
	\end{equation}
	\begin{align*}
		\mbox{We have, }\quad\mathcal{R}_{\eta,\kappa}^{(\alpha,\beta)}(\Phi;x)-\Phi(x)&=
		\mathcal{H}_{\eta,\kappa}^{(\alpha,\beta)}(\Phi-\psi;x)-(\Phi-\psi)(x)+\mathcal{H}_{\eta,\kappa}^{(\alpha,\beta)}(\psi;x)-\psi(x)\\&\quad+\Phi\left(x+\frac{1+\alpha}{\eta}\right)-\Phi(x).
	\end{align*}
	\begin{align*}
		|\mathcal{R}_{\eta,\kappa}^{(\alpha,\beta)}(\Phi;x)-\Phi(x)|&\leq|\mathcal{H}_{\eta,\kappa}^{(\alpha,\beta)}(\Phi-\psi;x)-(\Phi-\psi)(x)|+|\mathcal{H}_{\eta,\kappa}^{(\alpha,\beta)}(\psi;x)-\psi(x)|\\&\quad+\left|\Phi\left(x+\frac{1+\alpha}{\eta}\right)-\Phi(x)\right|\\
		&\leq 2\|\Phi-\psi\|+\delta \|\psi^{''}\|+\omega\left(\Phi,\frac{1+\alpha}{\eta}\right).
	\end{align*}
	Taking infimum of $\psi$ on $W_{C^{2}([0,\infty))}$ of the right hand side of the inequality,
	\begin{align*}
		|\mathcal{R}_{\eta,\kappa}^{(\alpha,\beta)}(\Phi;x)-\Phi(x)|&\leq K_{2}\left(\Phi,\frac{\delta}{2}\right)+\omega\left(\Phi,\frac{1+\alpha}{\eta}\right).
	\end{align*}
	By (\ref{eqn4}),
	\begin{align*}
		|\mathcal{R}_{\eta,\kappa}^{(\alpha,\beta)}(\Phi;x)-\Phi(x)|&\leq \mathfrak{M}\omega_{2}(\Phi,\sqrt{\delta/2})+\omega\left(\Phi,\frac{1+\alpha}{\eta}\right).
	\end{align*}
\end{proof}
Here, we study ways to apply the Ditzian-toitik moduli of smoothness to find the degree of approximation:
\begin{align*}
	\mbox{By \cite{z.ditzian}, let}\quad\omega_{\phi^{\lambda}}^{2}(\Phi,\delta)&=\sup_{0<\epsilon\leq\delta}\left(\sup_{x,x+\epsilon\phi^{\lambda},x-2\epsilon\phi^{\lambda}\in [0,\infty)}\left|\Phi(x+\epsilon\phi^{\lambda})-2\Phi(x)+\Phi(x-\epsilon\phi^{\lambda})\right|\right),
\end{align*}
and the corresponding K-functional is given by
\begin{equation*}
	K_{2,\phi^{\lambda}}(\Phi,\delta^{2})=\inf_{\psi^{'}\in A.C._{loc}([0,\infty))}\{\|\Phi-\psi\|+\delta^{2}\|\phi^{2\lambda}\psi^{''}\|\},
\end{equation*}
\begin{equation*}
	\mbox{and}\;\;\mathfrak{D}_{\lambda}^{2}=\{\psi\in \mathcal{C}_{[0,\infty)}^{\mathcal{B}}:\psi^{'}\in A.C._{loc}([0,\infty)), \|\phi^{2\lambda}\psi^{''}\|<\infty \},\;\mbox{where}\;\phi^{2}(x)=x, 0\leq\lambda\leq1.
\end{equation*}
We have,
  $$K_{2,\phi^{\lambda}}(\Phi,\delta^{2})\sim \omega_{\phi^{\lambda}}^{2}(\Phi,\delta).$$
\begin{theorem}
	For $\Phi\in \mathcal{C}_{[0,\infty)}^{\mathcal{B}}$ and $x\in[0,\infty)$, then
	\begin{equation*}
		|\mathcal{R}_{\eta,\kappa}^{(\alpha,\beta)}(\Phi;x)-\Phi(x)|
		\leq4\omega_{\phi^{\lambda}}^{2}\left(\Phi,\frac{\delta_{\eta}^{(1-\lambda)}(x)}{\sqrt{2n}}\right)+\omega\left(\Phi,\frac{1+\alpha}{\eta}\right).
	\end{equation*}
\end{theorem}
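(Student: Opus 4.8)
The plan is to mimic the Peetre K-functional argument of the preceding theorem, only replacing the ordinary second modulus by the weighted (Ditzian--Totik) one, so that the whole estimate is driven by $K_{2,\phi^{\lambda}}$ and its equivalence with $\omega_{\phi^{\lambda}}^{2}$. I would again work with the auxiliary operator $\mathcal{H}_{\eta,\kappa}^{(\alpha,\beta)}$ of (\ref{eqn6}), which reproduces linear functions, so that $\mathcal{H}_{\eta,\kappa}^{(\alpha,\beta)}(z-x;x)=0$, and which is uniformly bounded: since $|\mathcal{R}_{\eta,\kappa}^{(\alpha,\beta)}(g;x)|\le\|g\|$ by Lemma \ref{lemma5} and the two shift terms are each bounded by $\|g\|$, the cancellation of the $+g(x)$ term gives $|\mathcal{H}_{\eta,\kappa}^{(\alpha,\beta)}(g;x)-g(x)|\le 2\|g\|$.

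For $\psi\in\mathfrak{D}_{\lambda}^{2}$ I would expand by Taylor's formula as in (\ref{eqn7}) and apply $\mathcal{H}_{\eta,\kappa}^{(\alpha,\beta)}$; the linear term vanishes and, since the remainder $G(z)=\int_{x}^{z}(z-u)\psi''(u)\,du$ satisfies $G(x)=0$, one is left with $\mathcal{H}_{\eta,\kappa}^{(\alpha,\beta)}(\psi;x)-\psi(x)=\mathcal{R}_{\eta,\kappa}^{(\alpha,\beta)}(G;x)-G\!\left(x+\tfrac{1+\alpha}{\eta}\right)$. The decisive tool is the Ditzian--Totik inequality
$$\left|\int_{x}^{z}\frac{|z-u|}{\phi^{2\lambda}(u)}\,du\right|\le\phi^{-2\lambda}(x)\,(z-x)^{2},\qquad 0\le\lambda\le1,$$
which, after writing $\psi''=\phi^{-2\lambda}\bigl(\phi^{2\lambda}\psi''\bigr)$ and extracting $\|\phi^{2\lambda}\psi''\|$, yields $|G(z)|\le\|\phi^{2\lambda}\psi''\|\,\phi^{-2\lambda}(x)(z-x)^{2}$. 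Feeding this into both pieces and using the central second moment $\mu_{\eta,2}(x)$ from Lemma \ref{lemma4} (together with positivity of $\mathcal{R}_{\eta,\kappa}^{(\alpha,\beta)}$, so that $|\mathcal{R}_{\eta,\kappa}^{(\alpha,\beta)}(G;x)|\le\mathcal{R}_{\eta,\kappa}^{(\alpha,\beta)}(|G|;x)$) gives
$$|\mathcal{H}_{\eta,\kappa}^{(\alpha,\beta)}(\psi;x)-\psi(x)|\le\|\phi^{2\lambda}\psi''\|\,\phi^{-2\lambda}(x)\Bigl(\mu_{\eta,2}(x)+\bigl(\tfrac{1+\alpha}{\eta}\bigr)^{2}\Bigr)=:\|\phi^{2\lambda}\psi''\|\,\delta_{\eta}^{2}(x).$$

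Then I would split $\mathcal{R}_{\eta,\kappa}^{(\alpha,\beta)}(\Phi;x)-\Phi(x)=\bigl[\mathcal{H}_{\eta,\kappa}^{(\alpha,\beta)}(\Phi-\psi;x)-(\Phi-\psi)(x)\bigr]+\bigl[\mathcal{H}_{\eta,\kappa}^{(\alpha,\beta)}(\psi;x)-\psi(x)\bigr]+\bigl[\Phi(x+\tfrac{1+\alpha}{\eta})-\Phi(x)\bigr]$, bound the first bracket by $2\|\Phi-\psi\|$, the second by the line above, and the third by $\omega(\Phi,\tfrac{1+\alpha}{\eta})$ via (\ref{eqn5}). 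Taking the infimum over $\psi\in\mathfrak{D}_{\lambda}^{2}$ turns the first two terms into $2\,K_{2,\phi^{\lambda}}(\Phi,\tfrac12\delta_{\eta}^{2}(x))$, and the stated equivalence $K_{2,\phi^{\lambda}}(\Phi,\delta^{2})\sim\omega_{\phi^{\lambda}}^{2}(\Phi,\delta)$ replaces it by a constant multiple of $\omega_{\phi^{\lambda}}^{2}(\Phi,\delta_{\eta}(x)/\sqrt{2})$. Since $\phi^{2}(x)=x$, the quantity $\delta_{\eta}^{2}(x)=\phi^{-2\lambda}(x)(\mu_{\eta,2}(x)+(\tfrac{1+\alpha}{\eta})^{2})$ is comparable to $\phi^{2(1-\lambda)}(x)/\eta$, which is exactly the $\delta_{\eta}^{(1-\lambda)}(x)/\sqrt{2\eta}$ in the statement, while combining the factor $2$ from the $\mathcal{H}$-bound with the equivalence constant produces the displayed factor $4$.

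I expect the main obstacle to be the Ditzian--Totik integral inequality: it must be treated separately for $z>x$ and $z<x$ (the latter being the delicate case, which reduces to the elementary bound $\ln t\le t-1$), and one must check that the singularity of $\phi^{-2\lambda}$ at the origin does not spoil the estimate. A secondary, purely bookkeeping, difficulty is verifying that $\phi^{-2\lambda}(x)\,\mu_{\eta,2}(x)$ really collapses to the announced $\delta_{\eta}^{(1-\lambda)}(x)$ once the explicit expression for $\mu_{\eta,2}(x)$ from Lemma \ref{lemma4} is inserted.
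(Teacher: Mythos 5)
Your overall architecture is the same as the paper's: the auxiliary operator $\mathcal{H}_{\eta,\kappa}^{(\alpha,\beta)}$ of (\ref{eqn6}) that annihilates linear terms, a Taylor expansion with integral remainder, a Ditzian--Totik monotonicity inequality to control the remainder, and a final split plus the K-functional/modulus equivalence. The bookkeeping differences (your bound $2\|\Phi-\psi\|$ versus the paper's $4\|\Phi-\psi\|$ via $|\mathcal{H}_{\eta,\kappa}^{(\alpha,\beta)}(\Phi;x)|\le3\|\Phi\|$) are harmless. However, there is a genuine gap, and it is exactly at the point you flagged and deferred: you carry the pure step-weight $\phi^{2\lambda}$, $\phi^{2}(x)=x$, through the whole argument, so your final quantity is $\phi^{-2\lambda}(x)\bigl(\mu_{\eta,2}(x)+(\tfrac{1+\alpha}{\eta})^{2}\bigr)$. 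But the $\delta_{\eta}(x)$ appearing in the statement is the \emph{modified} step-weight defined in the paper's proof, $\delta_{\eta}^{2}(x)=\frac{\phi^{2}(x)(3\beta+1)+\alpha^{2}\beta+4\alpha\beta+3\beta+\alpha+1}{\beta}$, which is bounded below by a positive constant; it enters through $\mu_{\eta,2}(x)\le\frac{1}{\eta}\delta_{\eta}^{2}(x)$. Your quantity is \emph{not} comparable to $\delta_{\eta}^{2(1-\lambda)}(x)/(2\eta)$ uniformly on $[0,\infty)$: by Lemma \ref{lemma4}, $\mu_{\eta,2}(x)+(\tfrac{1+\alpha}{\eta})^{2}$ has an additive term of order $\eta^{-2}$, so $\phi^{-2\lambda}(x)\bigl(\mu_{\eta,2}(x)+(\tfrac{1+\alpha}{\eta})^{2}\bigr)\gtrsim x^{-\lambda}\eta^{-2}\to\infty$ as $x\to0^{+}$, whereas $\delta_{\eta}^{2(1-\lambda)}(x)/(2\eta)$ stays of order $\eta^{-1}$ there. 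Concretely, for $\lambda\in(\tfrac12,1]$ and $x\sim\eta^{-2}$ your quantity is of order $\eta^{2\lambda-2}\gg\eta^{-1}$. Since $\omega_{\phi^{\lambda}}^{2}(\Phi,\cdot)$ is nondecreasing in its second argument, a bound by the modulus at a \emph{larger} argument does not imply the stated bound, so your proof does not close at or near the origin.

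The paper avoids this precisely by switching the weight: inequality (\ref{eqn13}) is invoked in the form $\frac{|z-u|}{\delta_{\eta}^{2\lambda}(u)}\le\frac{|z-x|}{\delta_{\eta}^{2\lambda}(x)}$ (valid because $\delta_{\eta}^{2}$ is increasing and concave, just like $\phi^{2}$), the Taylor remainder is estimated against $\|\delta_{\eta}^{2\lambda}\psi''\|$ rather than $\|\phi^{2\lambda}\psi''\|$, and one arrives at (\ref{eqn14}), $|\mathcal{H}_{\eta,\kappa}^{(\alpha,\beta)}(\psi;x)-\psi(x)|\le\frac{2\delta_{\eta}^{2(1-\lambda)}(x)}{\eta}\|\delta_{\eta}^{2\lambda}\psi''\|$, which is finite for every $x\ge0$ including $x=0$. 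To repair your argument you must make the same replacement (and then, as the paper tacitly does, pass from the K-functional with weight $\delta_{\eta}^{2\lambda}$ back to $\omega_{\phi^{\lambda}}^{2}$); working with $\phi^{2\lambda}$ alone, the comparability step you need is simply false near $x=0$.
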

\begin{proof}
	Consider
	\begin{equation}\label{eqn9}
		\mathcal{H}_{\eta,\kappa}^{(\alpha,\beta)}(\Phi;x)=\mathcal{R}_{\eta,\kappa}^{(\alpha,\beta)}(\Phi;x)-\Phi\left(x+\frac{1+\alpha}{\eta}\right)+\Phi(x).
	\end{equation}
	Let $\psi\in \mathfrak{D}_{\lambda}^{2}$, then by Taylor's theorem
	\begin{equation}\label{eqn10}
		\psi(z)=\psi(x)+(z-x)\psi^{'}(x)+\frac{1}{2}\int\limits_{x}^{z}(z-u)\psi^{''}(u)du.
	\end{equation}
	Applying $\mathcal{H}_{\eta,\kappa}^{(\alpha,\beta)}$ on (\ref{eqn10}),
	\begin{equation}\label{eqn11}
		\mathcal{H}_{\eta,\kappa}^{(\alpha,\beta)}(\psi;x)=\psi(x)+\psi^{'}(x)\mathcal{H}_{\eta,\kappa}^{(\alpha,\beta)}(z-x;x)+\frac{1}{2}\mathcal{H}_{\eta,\kappa}^{(\alpha,\beta)}\left(\int\limits_{x}^{z}(z-u)\psi^{''}(u)du;x\right).
	\end{equation}
	By Lemma \ref{lemma4} and (\ref{eqn9}),
	\begin{equation*}
		\mathcal{H}_{\eta,\kappa}^{(\alpha,\beta)}(z-x;x)=0.
	\end{equation*}
	We have,
	\begin{equation}\label{eqn12}
		|\mathcal{H}_{\eta,\kappa}^{(\alpha,\beta)}(\Phi;x)|\leq3\|\Phi\|,
	\end{equation}
	and
	\begin{align*}
		\mu_{\eta,2}(x)&
		\leq\frac{1}{\eta}\delta_{\eta}^{2}(x),
	\end{align*}
	where $\delta_{\eta}^{2}(x)=\frac{\phi^{2}(x)(3\beta+1)+\alpha^{2}\beta+4\alpha\beta+3\beta+\alpha+1}{\beta}$.\\
	From (\cite{z.ditzian}, p. 141), for $z<u<x$, we have
	\begin{equation}\label{eqn13}
		\frac{|z-u|}{\phi^{2\lambda}(u)}\leq\frac{|z-x|}{\phi^{2\lambda}(x)}\;\mbox{and}\; \frac{|z-u|}{\delta_{\eta}^{2\lambda}(u)}\leq\frac{|z-x|}{\delta_{\eta}^{2\lambda}(x)}.
	\end{equation}
	By (\ref{eqn9}) and (\ref{eqn11}),
	\begin{align*}
		|\mathcal{H}_{\eta,\kappa}^{(\alpha,\beta)}(\psi;x)-\psi(x)|&
		\leq\left|\mathcal{H}_{\eta,\kappa}^{(\alpha,\beta)}\left(\int\limits_{x}^{z}(z-u)\psi^{''}(u)du;x\right)\right|\\
		&\leq\left|\mathcal{R}_{\eta,\kappa}^{(\alpha,\beta)}\left(\int\limits_{x}^{z}(z-u)\psi^{''}(u)du;x\right)\right|+\left|\int_{x}^{x+\frac{1+\alpha}{\eta}}\left(x+\frac{1+\alpha}{\eta}-u\right)\psi^{''}(u)du\right|,
	\end{align*}
	using (\ref{eqn13}),
	\begin{align*}
		\leq\|\delta_{\eta}^{2\lambda}\psi^{''}\|\mathcal{R}_{\eta,\kappa}^{(\alpha,\beta)}\left(\frac{(z-x)^{2}}{\delta_{\eta}^{2\lambda}(x)};x\right)+\frac{\|\delta_{\eta}^{2\lambda}\psi^{''}\|}{\delta_{\eta}^{2\lambda}(x)}\left(\frac{1+\alpha}{\eta}\right)^{2}.
	\end{align*}
	\begin{align*}
		|\mathcal{H}_{\eta,\kappa}^{(\alpha,\beta)}(\psi;x)-\psi(x)|&\leq\delta_{\eta}^{-2\lambda}(x)\|\delta_{\eta}^{2\lambda}\psi^{''}\|\left(\mu_{\eta,2}(x)\right)+\delta_{\eta}^{-2\lambda}(x)\|\delta_{\eta}^{2\lambda}\psi^{''}\|\left(\mathcal{R}_{\eta,\kappa}^{(\alpha,\beta)}((z-x);x)\right)^{2}\\
		&\leq\delta_{\eta}^{-2\lambda}(x)\|\delta_{\eta}^{2\lambda}\psi^{''}\|\frac{\delta_{\eta}^{2}(x)}{\eta}+\delta_{\eta}^{-2\lambda}(x)\|\delta_{\eta}^{2\lambda}\psi^{''}\|\frac{\delta_{\eta}^{2}(x)}{\eta}.
	\end{align*}
	Hence,
	\begin{equation}\label{eqn14}
		|\mathcal{H}_{\eta,\kappa}^{(\alpha,\beta)}(\psi;x)-\psi(x)|\leq\frac{2\delta_{\eta}^{2(1-\lambda)}(x)}{\eta}\|\delta_{\eta}^{2\lambda}\psi^{''}\|.
	\end{equation}
	Using (\ref{eqn2}), (\ref{eqn12}) and  (\ref{eqn14}),
	\begin{align*}
		|\mathcal{H}_{\eta,\kappa}^{(\alpha,\beta)}(\Phi;x)-\Phi(x)|&\leq|\mathcal{H}_{\eta,\kappa}^{(\alpha,\beta)}(\Phi-\psi;x)|+|\mathcal{H}_{\eta,\kappa}^{(\alpha,\beta)}(\psi;x)-\psi(x)|+|\Phi(x)-\psi(x)|\\
		&\leq4\|\Phi-\psi\|+|\mathcal{H}_{\eta,\kappa}^{(\alpha,\beta)}(\psi;x)-\psi(x)|\\
		&\leq4\|\Phi-\psi\|+\frac{2\delta_{\eta}^{2(1-\lambda)}(x)}{\eta}\|\delta_{\eta}^{2\lambda}\psi^{''}\|.
	\end{align*}
	Hence,
	\begin{align*}
		|\mathcal{R}_{\eta,\kappa}^{(\alpha,\beta)}(\Phi;x)-\Phi(x)|&\leq |\mathcal{H}_{\eta,\kappa}^{(\alpha,\beta)}(\Phi;x)-\Phi(x)|+\left|\Phi\left(x+\frac{1+\alpha}{\eta}\right)-\Phi(x)\right|\\
		&\leq4\omega_{\phi^{\lambda}}^{2}\left(\Phi,\frac{\delta_{\eta}^{(1-\lambda)}(x)}{\sqrt{2\eta}}\right)+\omega\left(\Phi,\frac{1+\alpha}{\eta}\right)
	\end{align*}
\end{proof}

Now, we discuss Voronovskaja asymptotic formula (\cite{kapil}) for the operators $\mathcal{R}_{\eta,\kappa}^{(\alpha,\beta)}(\Phi;x)$.
\begin{theorem}
	For functions $\Phi,\Phi^{'},\Phi^{''}\in \mathcal{C}_{[0,\infty)}^{\mathcal{B}}$ and $x\in[0,\infty)$, then
	\begin{equation*}
		\lim_{\eta\rightarrow\infty}\eta[\mathcal{R}_{\eta,\kappa}^{(\alpha,\beta)}(\Phi;x)-\Phi(x)]=(1+\alpha)\Phi^{'}(x)+\frac{x(3\beta+1)}{2\beta}\Phi^{''}(x).
	\end{equation*}
\end{theorem}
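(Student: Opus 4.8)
The plan is to follow the classical Voronovskaja template: expand $\Phi$ to second order about $x$, apply the linear operator term by term using the central-moment formulas of Lemma \ref{lemma4}, and then control the Taylor remainder by a Cauchy--Schwarz estimate against the fourth central moment.

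First I would invoke Taylor's theorem for $\Phi$, whose second derivative is continuous and bounded by hypothesis, writing
$$\Phi(z)=\Phi(x)+(z-x)\Phi'(x)+\tfrac{1}{2}(z-x)^{2}\Phi''(x)+(z-x)^{2}r(z,x),$$
where $r(\cdot,x)$ is bounded on $[0,\infty)$, continuous in $z$, and satisfies $r(x,x)=\lim_{z\to x}r(z,x)=0$; explicitly $r(z,x)=(z-x)^{-2}\int_{x}^{z}(z-u)[\Phi''(u)-\Phi''(x)]\,du$. Applying $\mathcal{R}_{\eta,\kappa}^{(\alpha,\beta)}$ to both sides, using linearity together with $\mathcal{R}_{\eta,\kappa}^{(\alpha,\beta)}(\varrho_{0};x)=1$, gives
$$\mathcal{R}_{\eta,\kappa}^{(\alpha,\beta)}(\Phi;x)-\Phi(x)=\Phi'(x)\mu_{\eta,1}(x)+\tfrac{1}{2}\Phi''(x)\mu_{\eta,2}(x)+\mathcal{R}_{\eta,\kappa}^{(\alpha,\beta)}\!\big((z-x)^{2}r(z,x);x\big).$$

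Next I would multiply through by $\eta$ and read off the limits of the first two terms from Lemma \ref{lemma4}. Since $\eta\mu_{\eta,1}(x)=1+\alpha$ identically, the first term contributes $(1+\alpha)\Phi'(x)$; since $\eta\mu_{\eta,2}(x)=\frac{x(3\beta+1)}{\beta}+\frac{\alpha^{2}\beta+4\alpha\beta+3\beta+\alpha+1}{\beta\eta}\to\frac{x(3\beta+1)}{\beta}$, the second term contributes $\frac{x(3\beta+1)}{2\beta}\Phi''(x)$. These are exactly the two terms in the asserted limit, so everything reduces to showing that the rescaled remainder vanishes.

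The main obstacle is precisely this remainder. I would estimate it by the Cauchy--Schwarz inequality for positive linear operators,
$$\Big|\mathcal{R}_{\eta,\kappa}^{(\alpha,\beta)}\!\big((z-x)^{2}r(z,x);x\big)\Big|\le\sqrt{\mu_{\eta,4}(x)}\,\sqrt{\mathcal{R}_{\eta,\kappa}^{(\alpha,\beta)}\!\big(r^{2}(z,x);x\big)},$$
so that the quantity of interest is bounded by $\big(\eta\sqrt{\mu_{\eta,4}(x)}\big)\sqrt{\mathcal{R}_{\eta,\kappa}^{(\alpha,\beta)}(r^{2}(z,x);x)}$. From the explicit formula in Lemma \ref{lemma4}, the leading term of $\mu_{\eta,4}(x)$ is of order $\eta^{-2}$, hence $\mu_{\eta,4}(x)=O(\eta^{-2})$ and the factor $\eta\sqrt{\mu_{\eta,4}(x)}$ stays bounded as $\eta\to\infty$. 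It then suffices that the remaining factor tend to zero. Because $r^{2}(\cdot,x)$ is bounded, continuous, and vanishes at $z=x$, it lies in the Korovkin class, and the uniform convergence established in the first theorem of this section yields $\mathcal{R}_{\eta,\kappa}^{(\alpha,\beta)}(r^{2}(z,x);x)\to r^{2}(x,x)=0$. A bounded factor times a vanishing factor is zero in the limit, which annihilates the remainder and completes the argument. The one point I would verify carefully is the regularity of $r(\cdot,x)$ asserted above, namely its boundedness and continuity so that its square is admissible in the Korovkin step; this follows from the continuity and boundedness of $\Phi''$ through the integral form of the remainder.
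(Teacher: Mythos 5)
Your proposal is correct and follows essentially the same route as the paper: a second-order Taylor expansion about $x$, the central moments $\mu_{\eta,1}$ and $\mu_{\eta,2}$ from Lemma \ref{lemma4} for the main terms, and a Cauchy--Schwarz estimate against the fourth central moment $\mu_{\eta,4}$ to kill the remainder. If anything, your bookkeeping is slightly cleaner than the paper's: you keep the factor $\eta$ with $\sqrt{\mu_{\eta,4}(x)}$ (which stays bounded since $\mu_{\eta,4}(x)=O(\eta^{-2})$) and then only need $\mathcal{R}_{\eta,\kappa}^{(\alpha,\beta)}(r^{2}(z,x);x)\to 0$, which follows rigorously from the Korovkin convergence theorem, whereas the paper asserts the stronger and only loosely justified claim $\lim_{\eta\to\infty}\eta^{2}\mathcal{R}_{\eta,\kappa}^{(\alpha,\beta)}(\epsilon_{\mathfrak{B}}^{2}(x;z);x)=0$.
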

\begin{proof}
	By the Taylor's theorem
	\begin{equation}\label{eqn15}
		\Phi(z)=\Phi(x)+(z-x)\Phi^{'}(x)+\frac{(z-x)^{2}}{2}\Phi^{''}(x)+\epsilon_{\mathfrak{B}}(x;z)(z-x)^{2},
	\end{equation}\\where $\epsilon_{\mathfrak{B}}(x;z)$ is the Peano form of the remainder,\;$\epsilon_{\mathfrak{B}}(x;z)\in \mathcal{C}_{[0,\infty)}^{\mathcal{B}}.$\\
	We have,\hspace{2cm}$\epsilon_{\mathfrak{B}}(x;z)=\frac{\Phi(z)-\Phi(x)-(z-x)\Phi^{'}(x)-\frac{(z-x)^{2}}{2!}\Phi^{''}(x)}{(z-x)^{2}}$,\\and
	\begin{equation}\label{eqn16}
		\lim_{z\rightarrow x}\epsilon_{\mathfrak{B}}(x;z)=0.
	\end{equation}
	Applying $\mathcal{R}_{\eta,\kappa}^{(\alpha,\beta)}$ on (\ref{eqn15}),
	\begin{align*}
		\mathcal{R}_{\eta,\kappa}^{(\alpha,\beta)}(\Phi(z);x)
		&=\Phi(x)+\Phi^{'}(x)\mu_{\eta,1}(x)+\frac{\Phi^{''}(x)}{2}\mu_{\eta,2}(x)\\&\quad+\mathcal{R}_{\eta,\kappa}^{(\alpha,\beta)}(\epsilon_{\mathfrak{B}}(x;z)(z-x)^{2};x).
	\end{align*}
	\begin{align*}
		\lim_{\eta\rightarrow\infty}\eta[\mathcal{R}_{\eta,\kappa}^{(\alpha,\beta)}(\Phi(z);x)-\Phi(x)]&=\Phi^{'}(x)\lim_{\eta\rightarrow\infty}(\eta\mu_{\eta,1}(x))\\&\quad+\frac{\Phi^{''}(x)}{2}\lim_{\eta\rightarrow\infty}(\eta\mu_{\eta,2}(x))\\&\quad+\lim_{\eta\rightarrow\infty}(\eta\mathcal{R}_{\eta,\kappa}^{(\alpha,\beta)}(\epsilon_{\mathfrak{B}}(x;z)(z-x)^{2};x))\\
		&=(1+\alpha)\Phi^{'}(x)+\frac{x(3\beta+1)}{2\beta}\Phi^{''}(x)+\mathcal{R}_{\eta},
	\end{align*}
	where $\mathcal{R}_{\eta}=\displaystyle\lim_{\eta\rightarrow\infty}(\eta\mathcal{R}_{\eta,\kappa}^{(\alpha,\beta)}(\epsilon_{\mathfrak{B}}(x;z)(z-x)^{2};x))$.\\Using Cauchy-Bunyakovsky-Schwarz Inequality,
	\begin{equation*}
		\eta\mathcal{R}_{\eta,\kappa}^{(\alpha,\beta)}(\epsilon_{\mathfrak{B}}(x;z)(z-x)^{2};x)\leq \sqrt{\eta^{2}\mathcal{R}_{\eta,\kappa}^{(\alpha,\beta)}(\epsilon_{\mathfrak{B}}^{2}(x;z);x)}\sqrt{\mu_{\eta,4}(x)}.
	\end{equation*}
	We observe that by (\ref{eqn16}) if $\eta\rightarrow\infty$, then $z\rightarrow x$ and $\displaystyle\lim_{z\rightarrow x}\epsilon_{\mathfrak{B}}(x;z)=0$. It follows that
	\begin{equation*}
		\lim_{\eta\rightarrow\infty}(\eta^{2}\mathcal{R}_{\eta,\kappa}^{(\alpha,\beta)}(\epsilon_{\mathfrak{B}}^{2}(x;z);x))=0\;\mbox{uniformly with respect to}\;x\in[0,\infty).
	\end{equation*}
	So,\;$R_{\eta}=\displaystyle\lim_{\eta\rightarrow\infty}(\eta\mathcal{R}_{\eta,\kappa}^{(\alpha,\beta)}(\epsilon_{\mathfrak{B}}(x;z)(z-x)^{2};x))=0$.\\
	Therefore, the desired result is proved.
\end{proof}
\begin{theorem}
	For $\Phi\in\mathcal{C}_{[0,\infty)}^{\mathcal{B}}$ and $x\in[0,\infty)$, then
	\begin{equation*}
		|\mathcal{R}_{\eta,\kappa}^{(\alpha,\beta)}(\Phi;x)-\Phi(x)|\leq2\omega(\Phi,\delta),\;\mbox{where}\;\delta=\sqrt{\mu_{\eta,2}(x)}.
	\end{equation*}
\end{theorem}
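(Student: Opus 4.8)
The plan is to exploit the positivity and linearity of $\mathcal{R}_{\eta,\kappa}^{(\alpha,\beta)}$ together with the constant-reproduction property $\mathcal{R}_{\eta,\kappa}^{(\alpha,\beta)}(\varrho_{0};x)=1$ supplied by Lemma \ref{lemma3}. First I would rewrite the error as the operator applied to a centered increment,
\begin{equation*}
\mathcal{R}_{\eta,\kappa}^{(\alpha,\beta)}(\Phi;x)-\Phi(x)=\mathcal{R}_{\eta,\kappa}^{(\alpha,\beta)}\bigl(\Phi(z)-\Phi(x);x\bigr),
\end{equation*}
which is legitimate precisely because the operator reproduces constants. Taking absolute values and using positivity to pass the modulus inside the series and the P\u{a}lt\u{a}nea integral then gives $|\mathcal{R}_{\eta,\kappa}^{(\alpha,\beta)}(\Phi;x)-\Phi(x)|\le \mathcal{R}_{\eta,\kappa}^{(\alpha,\beta)}\bigl(|\Phi(z)-\Phi(x)|;x\bigr)$.

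Next I would invoke the elementary majorization property of the modulus of continuity from (\ref{eqn5}), namely $|\Phi(z)-\Phi(x)|\le \omega(\Phi,\delta)\bigl(1+\delta^{-1}|z-x|\bigr)$ valid for any $\delta>0$. Applying $\mathcal{R}_{\eta,\kappa}^{(\alpha,\beta)}$ and again using $\mathcal{R}_{\eta,\kappa}^{(\alpha,\beta)}(\varrho_{0};x)=1$ yields
\begin{equation*}
|\mathcal{R}_{\eta,\kappa}^{(\alpha,\beta)}(\Phi;x)-\Phi(x)|\le \omega(\Phi,\delta)\left(1+\frac{1}{\delta}\,\mathcal{R}_{\eta,\kappa}^{(\alpha,\beta)}(|z-x|;x)\right).
\end{equation*}

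The remaining step is to control the first absolute central moment. Here I would apply the Cauchy--Bunyakovsky--Schwarz inequality (exactly as already used in the Voronovskaja proof above) to obtain $\mathcal{R}_{\eta,\kappa}^{(\alpha,\beta)}(|z-x|;x)\le \bigl(\mathcal{R}_{\eta,\kappa}^{(\alpha,\beta)}((z-x)^{2};x)\bigr)^{1/2}=\sqrt{\mu_{\eta,2}(x)}$, where $\mu_{\eta,2}(x)$ is the second central moment computed in Lemma \ref{lemma4}. Choosing $\delta=\sqrt{\mu_{\eta,2}(x)}$ makes the parenthetical factor collapse to $1+1=2$, which is exactly the asserted bound.

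I do not expect a genuine obstacle, since the argument is the classical Shisha--Mond type estimate. The only points requiring care are that $\mathcal{R}_{\eta,\kappa}^{(\alpha,\beta)}$ is a positive operator, so that the absolute value may indeed be moved through the summation and the kernel $\mathcal{I}_{\kappa,\eta}^{\beta}$ (this holds because $\mathcal{L}_{\kappa}^{(\alpha)}(-\eta x/2)\ge 0$ for $x\ge0$, $\alpha>-1$, and $\mathcal{I}_{\kappa,\eta}^{\beta}\ge 0$), and that the Cauchy--Schwarz step is applied with respect to the probability measure induced by the operator, which is legitimate by $\mathcal{R}_{\eta,\kappa}^{(\alpha,\beta)}(\varrho_{0};x)=1$.
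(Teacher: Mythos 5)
Your proposal is correct and follows essentially the same route as the paper's proof: the classical Shisha--Mond argument using the majorization $|\Phi(z)-\Phi(x)|\le\bigl(1+\delta^{-1}|z-x|\bigr)\omega(\Phi,\delta)$, positivity of the operator, the Cauchy--Bunyakovsky--Schwarz bound $\mathcal{R}_{\eta,\kappa}^{(\alpha,\beta)}(|z-x|;x)\le\sqrt{\mu_{\eta,2}(x)}$, and the choice $\delta=\sqrt{\mu_{\eta,2}(x)}$. Your version is in fact slightly cleaner, since you keep the operator abstract and justify positivity explicitly (via $\mathcal{L}_{\kappa}^{(\alpha)}(-\eta x/2)\ge 0$), whereas the paper unpacks the kernel with somewhat loose notation.
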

\begin{proof}
	By the property of modulus of continuity,
	\begin{align*}
		|\Phi(z)-\Phi(x)|&\leq\omega(\Phi,|z-x|)\\
		&\leq\left(1+\frac{1}{\delta}|z-x|\right)\omega(\Phi,\delta).
	\end{align*}
	\begin{align*}
		|\mathcal{R}_{\eta,\kappa}^{(\alpha,\beta)}(\Phi;x)-\Phi(x)|&\leq p_{\eta,\kappa}(x)\int_{0}^{\infty}\mathcal{I}_{\kappa,\eta}^{\beta}(z)\left|\Phi(z)-\Phi(x)\right|dz\\
		&\leq\left(1+\frac{1}{\delta}p_{\eta,\kappa}(x)\int_{0}^{\infty}\mathcal{I}_{\kappa,\eta}^{\beta}(z)\left|z-x\right|dz\right)\omega(\Phi,\delta),
	\end{align*}
 where $p_{\eta,\kappa}(x)=\exp{\left(\frac{-\eta x}{2}\right)}2^{-\alpha-1}\sum_{\kappa=0}^{\infty}2^{-\kappa}\mathcal{L}_{\kappa}^{(\alpha)}\left(\frac{-\eta x}{2}\right)$.\\
	By Cauchy-Bunyakovsky-Schwarz inequality,
	\begin{align*}
		|\mathcal{R}_{\eta,\kappa}^{(\alpha,\beta)}(\Phi;x)-\Phi(x)|&\leq1+\left(\frac{1}{\delta}\sqrt{p_{\eta,\kappa}(x)\int_{0}^{\infty}\mathcal{I}_{\kappa,\eta}^{\beta}(z)dz}\sqrt{p_{\eta,\kappa}(x)\int_{0}^{\infty}\mathcal{I}_{\kappa,\eta}^{\beta}(z)(z-x)^{2}dz}\right)\\&\quad\times\omega(\Phi,\delta)\\
		&\quad=\left(1+\frac{1}{\delta}\sqrt{(\mathcal{R}_{\eta,\kappa}^{(\alpha,\beta)}(\Phi;x)((z-x)^{2};x)}\right)\omega(\Phi,\delta)\\
		&\quad=\left(1+\frac{1}{\delta}\sqrt{\mu_{\eta,2}(x)}\right)\omega(\Phi,\delta)=2\omega(\Phi,\delta).
	\end{align*}
\end{proof}
Now, the rate of convergence of $\mathcal{R}_{\eta,\kappa}^{(\alpha,\beta)}(\Phi;x)$ by the Lipschitz class $Lip_{\mathfrak{K}}(\tau),\;\tau>0$ is obtained. 
If $\Phi\in Lip_{\mathfrak{K}}(\tau)$, then function $\Phi$ satisfies the inequality 
\begin{equation}\label{eqn17}|\Phi(z)-\Phi(x)|\leq \mathfrak{K}|z-x|^{\tau},\quad x,z\in[0,\infty),
\end{equation}
where $\mathfrak{K}$ is a positive constant.
\begin{theorem}If $x\in[0,\infty)$ and $\Phi\in \mathcal{C}_{[0,\infty)}^{\mathcal{B}}$ belongs to the class $Lip_{\mathfrak{K}}(\tau)$, then
	\begin{equation*}
		|\mathcal{R}_{\eta,\kappa}^{(\alpha,\beta)}(\Phi;x)-\Phi(x)|\leq \mathfrak{K}(\mu_{\eta,2}(x))^{\frac{\eta}{2}},
	\end{equation*}
	where $\mu_{\eta,2}(x)=\frac{x(3\beta+1)}{\beta\eta}+\frac{\alpha^{2}\beta+4\alpha\beta+3\beta+\alpha+1}{\beta\eta^{2}}$.
\end{theorem}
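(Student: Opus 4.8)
The plan is to exploit that $\mathcal{R}_{\eta,\kappa}^{(\alpha,\beta)}$ is a positive linear operator which reproduces constants. By Lemma \ref{lemma3} we have $\mathcal{R}_{\eta,\kappa}^{(\alpha,\beta)}(\varrho_{0};x)=1$, so the difference can be rewritten as the operator acting on the increment of $\Phi$:
\begin{equation*}
	\mathcal{R}_{\eta,\kappa}^{(\alpha,\beta)}(\Phi;x)-\Phi(x)=\mathcal{R}_{\eta,\kappa}^{(\alpha,\beta)}(\Phi(z)-\Phi(x);x).
\end{equation*}
Then, by positivity (monotonicity) of the operator together with the triangle inequality, followed by the Lipschitz hypothesis \eqref{eqn17}, one obtains
\begin{equation*}
	|\mathcal{R}_{\eta,\kappa}^{(\alpha,\beta)}(\Phi;x)-\Phi(x)|\leq\mathcal{R}_{\eta,\kappa}^{(\alpha,\beta)}(|\Phi(z)-\Phi(x)|;x)\leq\mathfrak{K}\,\mathcal{R}_{\eta,\kappa}^{(\alpha,\beta)}(|z-x|^{\tau};x).
\end{equation*}

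The key step is to bound $\mathcal{R}_{\eta,\kappa}^{(\alpha,\beta)}(|z-x|^{\tau};x)$ by a power of the second central moment. Treating $\mathcal{R}_{\eta,\kappa}^{(\alpha,\beta)}(\cdot;x)$ as a positive linear functional (integration against the nonnegative kernel built from $p_{\eta,\kappa}(x)$ and $\mathcal{I}_{\kappa,\eta}^{\beta}$), I would apply Hölder's inequality with the conjugate exponents $p=2/\tau$ and $q=2/(2-\tau)$, valid for $0<\tau\leq1$, to get
\begin{equation*}
	\mathcal{R}_{\eta,\kappa}^{(\alpha,\beta)}(|z-x|^{\tau};x)\leq\left(\mathcal{R}_{\eta,\kappa}^{(\alpha,\beta)}((z-x)^{2};x)\right)^{\tau/2}\left(\mathcal{R}_{\eta,\kappa}^{(\alpha,\beta)}(\varrho_{0}(z);x)\right)^{(2-\tau)/2}.
\end{equation*}
Invoking $\mathcal{R}_{\eta,\kappa}^{(\alpha,\beta)}(\varrho_{0}(z);x)=1$ from Lemma \ref{lemma3} and the identification $\mu_{\eta,2}(x)=\mathcal{R}_{\eta,\kappa}^{(\alpha,\beta)}((\varrho_{1}(z)-x\varrho_{0}(z))^{2};x)$ from Lemma \ref{lemma4} collapses the right-hand side to $(\mu_{\eta,2}(x))^{\tau/2}$, whence
\begin{equation*}
	|\mathcal{R}_{\eta,\kappa}^{(\alpha,\beta)}(\Phi;x)-\Phi(x)|\leq\mathfrak{K}\,(\mu_{\eta,2}(x))^{\tau/2}.
\end{equation*}

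There is no serious analytic obstacle here; the argument is the standard Hölder-moment estimate for positive linear operators, and all the required ingredients are already supplied by Lemmas \ref{lemma3} and \ref{lemma4}. The only genuine care points are bookkeeping. First, one must verify the conjugacy $\tfrac{1}{p}+\tfrac{1}{q}=\tfrac{\tau}{2}+\tfrac{2-\tau}{2}=1$ and note that the estimate is meaningful precisely in the range $0<\tau\leq1$ (for $\tau>1$ on an unbounded interval the Lipschitz class reduces to constants), so the hypothesis $\tau>0$ should really be read as $0<\tau\leq1$. Second, the exponent $\tfrac{\eta}{2}$ appearing in the statement is a typographical slip for $\tfrac{\tau}{2}$, which is what the proof yields; I would correct this in the statement before recording the argument.
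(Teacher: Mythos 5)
Your argument is correct and is essentially the paper's own proof: positivity of the operator plus the Lipschitz hypothesis (\ref{eqn17}), then a moment inequality collapsing $\mathcal{R}_{\eta,\kappa}^{(\alpha,\beta)}(|z-x|^{\tau};x)$ to $(\mu_{\eta,2}(x))^{\tau/2}$ via Lemmas \ref{lemma3} and \ref{lemma4}. The only place you are more careful than the paper is the key step: the paper cites the Cauchy--Bunyakovsky--Schwarz inequality, which is literally valid only at $\tau=1$, whereas your H\"older application with conjugate exponents $2/\tau$ and $2/(2-\tau)$ is the correct justification for general $0<\tau\leq 1$; you are also right that the exponent $\frac{\eta}{2}$ in the statement is a typographical slip for $\frac{\tau}{2}$, since the paper's own proof concludes with $\mathfrak{K}(\mu_{\eta,2}(x))^{\frac{\tau}{2}}$.
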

\begin{proof}
	By (\ref{eqn17}),
	\begin{align*}|\mathcal{R}_{\eta,\kappa}^{(\alpha,\beta)}(\Phi;x)-\Phi(x)|&\leq \mathcal{R}_{\eta,\kappa}^{(\alpha,\beta)}(|\Phi(z)-\Phi(x)|;x) \\&\leq \mathcal{R}_{\eta,\kappa}^{(\alpha,\beta)}(\mathfrak{K}|z-x|^{\tau};x).\end{align*}
	By Cauchy-Bunyakovsky-Schwarz Inequality,
	\begin{align*}
		|\mathcal{R}_{\eta,\kappa}^{(\alpha,\beta)}(\Phi;x)-\Phi(x)|&\leq\mathfrak{K}[\mathcal{R}_{\eta,\kappa}^{(\alpha,\beta)}((z-x)^{2};x)]^{\frac{\tau}{2}}.
	\end{align*}
	using Lemma \ref{lemma4},
	\begin{align*}|\mathcal{R}_{\eta,\kappa}^{(\alpha,\beta)}(\Phi;x)-\Phi(x)|&\leq \mathfrak{K}\left[\frac{x(3\beta+1)}{\beta\eta}+\frac{\alpha^{2}\beta+4\alpha\beta+3\beta+\alpha+1}{\beta\eta^{2}}\right]^{\frac{\tau}{2}}= \mathfrak{K}(\mu_{\eta,2}(x))^{\frac{\tau}{2}}.
	\end{align*}
\end{proof}
\section{Weighted approximation}
The goal of this section is dedicated to finding approximation properties of the operators $\mathcal{R}_{\eta,\kappa}^{(\alpha,\beta)}$ corresponding to weighted spaces of continuous functions.\\ 
Let $\mathcal{B}_{\zeta}([0,\infty))$ be a normed space defined by
\begin{equation*}
	\mathcal{B}_{\zeta}([0,\infty))=\{\Phi:[0,\infty)\rightarrow \mathbb{R}:|\Phi(x)|\leq \mathcal{M}_{\Phi}\zeta(x),\;x\in[0,\infty)\},
\end{equation*}
endowed with the norm
\begin{equation*}
	\|\Phi\|_{\zeta}=\sup_{x\in[0,\infty)}\frac{|\Phi(x)|}{\zeta(x)},
\end{equation*}
where $\zeta(x)=1+x^{2}$ and constant $\mathcal{M}_{\Phi}>0$ depends on function $\Phi$.\\
Additionally, we define following spaces,
\begin{enumerate} \item[i.)]$\mathcal{C}_{\zeta}([0,\infty))=\{\Phi\in\mathcal{B}_{\zeta}([0,\infty)):\Phi\;\mbox{is continuous function on}\;[0,\infty)\}$\\
	\item[ii.)]$\displaystyle\mathcal{C}_{\zeta}^{*}([0,\infty))=\{\Phi\in\mathcal{C}_{\zeta}([0,\infty)):\lim_{x\rightarrow\infty}\frac{\Phi(x)}{\zeta(x)}\;\mbox{exists in}\;\mathbb{R}\}$
\end{enumerate}
It was shown in \cite{ispir2001modified} that for any $\Phi\in\displaystyle\mathcal{C}_{\zeta}^{*}([0,\infty))$, the weighted modulus of continuity is denoted by 
\begin{equation}\label{eqn18}
	\Omega(\Phi;\delta)=\sup_{0\leq\epsilon<\delta, x\in[0,\infty)}\frac{|\Phi(x+\epsilon)-\Phi(x)|}{(1+\epsilon^{2})\zeta(x)}.
\end{equation}
\begin{lemma}\label{lemma6}
	If $\Phi\in\displaystyle\mathcal{C}_{\zeta}([0,\infty))$, then
	\begin{enumerate}
		\item [(i)] $\Omega(\Phi;\delta)$ is a monotonically increasing function of $\delta$,
		\item [(ii)] $\Omega(\Phi;\lambda\delta)\leq2(1+\lambda)(1+\delta^{2})\Omega(\Phi;\delta), \lambda>0$,
  	\item [(iii)]$\Omega(\Phi;\delta)\rightarrow0\;\mbox{as}\;\delta\rightarrow0$.
		
	\end{enumerate}
	
		The weighted modulus of continuity definition enables us to write 
	\begin{equation}\label{eqn19}
		|\Phi(z)-\Phi(x)|\leq\zeta(x)(1+(z-x)^{2})\Omega(\Phi;|z-x|).
	\end{equation}
\end{lemma}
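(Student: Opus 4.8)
The plan is to establish the three structural properties (i)--(iii) directly from the definition (\ref{eqn18}) and then read off the pointwise bound (\ref{eqn19}) as an immediate consequence of that same definition.

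\emph{Monotonicity and the pointwise bound.} Property (i) is immediate: for $0<\delta_{1}\le\delta_{2}$ the admissible increments for $\Omega(\Phi;\delta_{1})$ form a subset of those for $\Omega(\Phi;\delta_{2})$ (the constraint $0\le\epsilon<\delta_{1}$ is contained in $0\le\epsilon<\delta_{2}$), so the supremum can only grow and $\Omega(\Phi;\delta_{1})\le\Omega(\Phi;\delta_{2})$. The estimate (\ref{eqn19}) is the definition read in reverse: assuming without loss of generality $z\ge x$ and taking $\epsilon=z-x$ in (\ref{eqn18}) gives $|\Phi(z)-\Phi(x)|\le(1+(z-x)^{2})\zeta(x)\,\Omega(\Phi;|z-x|)$, and the case $z<x$ is symmetric.

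\emph{The semi-additivity estimate (ii).} First I would dispose of $0<\lambda\le1$ by monotonicity, since then $\lambda\delta\le\delta$ and the prefactor $2(1+\lambda)(1+\delta^{2})\ge1$. For $\lambda>1$ set $n=\lceil\lambda\rceil$, so that $\lambda\le n\le1+\lambda$, fix $x\ge0$ and an increment $0\le\epsilon<\lambda\delta$, and split it into $n$ equal pieces of length $h:=\epsilon/n$, which satisfy $h<\delta$ because $\epsilon<\lambda\delta\le n\delta$. A telescoping sum, the triangle inequality, and (\ref{eqn18}) applied to each single step (increment $h<\delta$ based at $x+kh$) give
\[
|\Phi(x+\epsilon)-\Phi(x)|\le(1+h^{2})\,\Omega(\Phi;\delta)\sum_{k=0}^{n-1}\bigl(1+(x+kh)^{2}\bigr).
\]
The crux is then the elementary weight-splitting inequality $1+(x+s)^{2}\le2(1+x^{2})(1+s^{2})$ (valid because the difference equals $1+(x-s)^{2}+2x^{2}s^{2}\ge0$), applied with $s=kh$; since $kh<\epsilon$ this bounds the sum by $2(1+x^{2})\sum_{k=0}^{n-1}(1+(kh)^{2})\le2n(1+x^{2})(1+\epsilon^{2})$. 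Dividing by $(1+\epsilon^{2})\zeta(x)$ then cancels \emph{both} the $(1+\epsilon^{2})$ and the $(1+x^{2})$ factors and leaves $2n(1+h^{2})\Omega(\Phi;\delta)$; using $h<\delta$ and $n\le1+\lambda$ yields $2(1+\lambda)(1+\delta^{2})\Omega(\Phi;\delta)$, and taking the supremum over $\epsilon$ and $x$ gives (ii). The one nonobvious point is choosing the subdivision so that the $\epsilon$-dependent factors cancel and only $(1+\delta^{2})$ survives.

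\emph{The vanishing property (iii) and the main obstacle.} I expect the genuine difficulty to sit in (iii), which — consistently with the fact that (\ref{eqn18}) is introduced for $\mathcal{C}_{\zeta}^{*}([0,\infty))$ — really requires $\Phi\in\mathcal{C}_{\zeta}^{*}([0,\infty))$, i.e.\ that $L=\lim_{x\to\infty}\Phi(x)/\zeta(x)$ exists. The plan is to split $[0,\infty)=[0,A]\cup(A,\infty)$ for a large threshold $A$. On the tail, writing the quotient in (\ref{eqn18}) as a combination of $\Phi(x+\epsilon)/\zeta(x+\epsilon)$ and $\Phi(x)/\zeta(x)$ and using $\zeta(x+\epsilon)/\zeta(x)\to1$, both pieces tend to $L$, so the quotient is uniformly small for $A$ large and $\epsilon$ bounded; on the compact part $[0,A]$ the uniform continuity of $\Phi$ on $[0,A+1]$ together with $\zeta\ge1$ makes the quotient small once $\delta$ is small. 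Combining the two regimes gives $\Omega(\Phi;\delta)\to0$. The hard part will be making the tail estimate uniform in $\epsilon$, and it is precisely here that the existence of the limit $L$, rather than mere membership in $\mathcal{C}_{\zeta}([0,\infty))$, is indispensable.
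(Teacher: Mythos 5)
Your argument is correct, but note that the paper itself contains no proof of this lemma: it is simply quoted from the cited work of Ispir \cite{ispir2001modified}. So your proposal is not a variant of the paper's proof but a self-contained replacement for a citation, and as such it is essentially the standard argument from the literature. Parts (i) and (\ref{eqn19}) are fine, with one small technical remark: since (\ref{eqn18}) as printed takes the supremum over $0\le\epsilon<\delta$ with a \emph{strict} inequality, the choice $\epsilon=z-x$ is not literally admissible in the supremum defining $\Omega(\Phi;|z-x|)$; you should either add the one-line observation that continuity of $\Phi$ makes the supremum over $[0,|z-x|)$ dominate the boundary value, or note that the source defines $\Omega$ with $0<\epsilon\le\delta$. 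Your proof of (ii) is complete and the constants land exactly on the stated bound: splitting $\epsilon<\lambda\delta$ into $n=\lceil\lambda\rceil$ equal steps of length $h=\epsilon/n<\delta$, applying the definition at each base point $x+kh$, and using the weight-splitting inequality $1+(x+s)^2\le2(1+x^2)(1+s^2)$ (whose verification you give correctly) to return all weights to $\zeta(x)$ yields $2n(1+h^2)\Omega(\Phi;\delta)\le2(1+\lambda)(1+\delta^2)\Omega(\Phi;\delta)$.

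Your diagnosis of (iii) is the most valuable part of the proposal, because it identifies a genuine defect in the lemma as stated: for mere $\Phi\in\mathcal{C}_\zeta([0,\infty))$, property (iii) is \emph{false}. A concrete counterexample is $\Phi(x)=(1+x^2)\sin(x^3)$, which lies in $\mathcal{C}_\zeta([0,\infty))$ with $\mathcal{M}_\Phi=1$: taking $x$ with $\sin(x^3)=1$ and $\epsilon>0$ with $(x+\epsilon)^3=x^3+\pi$ (so $\epsilon\le\pi/(3x^2)$, hence $\epsilon<\delta$ for $x$ large) gives $|\Phi(x+\epsilon)-\Phi(x)|\ge2(1+x^2)$, so the quotient in (\ref{eqn18}) is at least $2/(1+\delta^2)$ and $\Omega(\Phi;\delta)\not\to0$. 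The correct hypothesis for (iii) is $\Phi\in\mathcal{C}_\zeta^*([0,\infty))$, which is exactly how the paper introduces $\Omega$ in the sentence preceding (\ref{eqn18}) and how the cited source states the result. Under that hypothesis your compact-plus-tail scheme does close: on the tail one writes the increment through $\Phi(x+\epsilon)/\zeta(x+\epsilon)-L$, $L(\zeta(x+\epsilon)-\zeta(x))/\zeta(x)$, and $L-\Phi(x)/\zeta(x)$, and the needed uniformity in $\epsilon<\delta\le1$ follows from the two elementary bounds $\zeta(x+\epsilon)/\zeta(x)\le3$ and $(\zeta(x+\epsilon)-\zeta(x))/\zeta(x)=\epsilon(2x+\epsilon)/(1+x^2)\le2\delta$, so the only non-uniform ingredient is the convergence $\Phi(y)/\zeta(y)\to L$, handled by choosing the threshold $A$ first.
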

\begin{theorem}
	For $\Phi\in \mathcal{C}_{[0,\infty)}^{\mathcal{B}}$ and $x\in[0,\infty)$, then
	\begin{align*}
		|\mathcal{R}_{\eta,\kappa}^{(\alpha,\beta)}(\Phi;x)-\Phi(x)|&\leq2\left(1+\frac{1}{\eta}\right)\zeta(x)\Omega\left(\Phi;\frac{1}{\sqrt{\eta}}\right)\left(1+\mathcal{E}_{1}\frac{x(3\beta+1)}{\beta\eta}+\sqrt{\mathcal{E}_{1}\frac{x(3\beta+1)}{\beta}}\right.\\
		&\left.\quad\times\left(1+\frac{1}{\eta}\sqrt{\mathcal{E}_{2}\frac{x^{2}(3\beta+1)^{2}}{\beta^{2}}}\right)\right).
	\end{align*}
\end{theorem}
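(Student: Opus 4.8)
The plan is to start from the pointwise bound (\ref{eqn19}) furnished by the weighted modulus of continuity, convert the factor $\Omega(\Phi;|z-x|)$ into the fixed quantity $\Omega(\Phi;1/\sqrt{\eta})$ by invoking the scaling property in Lemma \ref{lemma6}(ii), and then apply the positive linear operator $\mathcal{R}_{\eta,\kappa}^{(\alpha,\beta)}$ termwise. Concretely, choosing $\delta=1/\sqrt{\eta}$ and $\lambda=\sqrt{\eta}\,|z-x|$ in Lemma \ref{lemma6}(ii) gives
\begin{equation*}
	\Omega(\Phi;|z-x|)\leq 2\left(1+\sqrt{\eta}\,|z-x|\right)\left(1+\frac{1}{\eta}\right)\Omega\left(\Phi;\frac{1}{\sqrt{\eta}}\right),
\end{equation*}
so that (\ref{eqn19}) bounds $|\Phi(z)-\Phi(x)|$ by $2\zeta(x)(1+1/\eta)\Omega(\Phi;1/\sqrt{\eta})$ times the polynomial $(1+(z-x)^{2})(1+\sqrt{\eta}\,|z-x|)$. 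Since $\mathcal{R}_{\eta,\kappa}^{(\alpha,\beta)}$ is positive and reproduces constants (Lemma \ref{lemma3}), applying it to $|\Phi(z)-\Phi(x)|$ and pulling the $x$-dependent constant outside reduces the whole estimate to controlling $\mathcal{R}_{\eta,\kappa}^{(\alpha,\beta)}\big((1+(z-x)^{2})(1+\sqrt{\eta}\,|z-x|);x\big)$.

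Next I would expand the polynomial as $1+\sqrt{\eta}\,|z-x|+(z-x)^{2}+\sqrt{\eta}\,|z-x|^{3}$ and treat the four pieces separately. The constant term contributes $1$ and the quadratic term contributes exactly $\mu_{\eta,2}(x)$ by Lemma \ref{lemma4}. For the two odd powers I would apply the Cauchy--Bunyakovsky--Schwarz inequality, exactly as in the earlier theorems, to obtain
\begin{equation*}
	\mathcal{R}_{\eta,\kappa}^{(\alpha,\beta)}(|z-x|;x)\leq\sqrt{\mu_{\eta,2}(x)},\qquad \mathcal{R}_{\eta,\kappa}^{(\alpha,\beta)}(|z-x|^{3};x)\leq\sqrt{\mu_{\eta,2}(x)}\,\sqrt{\mu_{\eta,4}(x)}.
\end{equation*}
Collecting these yields the bound $1+\mu_{\eta,2}(x)+\sqrt{\eta}\sqrt{\mu_{\eta,2}(x)}\big(1+\sqrt{\mu_{\eta,4}(x)}\big)$ for the operator applied to the polynomial, which already has the nested shape of the target expression.

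Finally, I would insert the explicit moments from Lemma \ref{lemma4}, replacing each $\mu_{\eta,\jmath}$ by its dominant term: the leading part of $\mu_{\eta,2}(x)$ is $x(3\beta+1)/(\beta\eta)$, and since $9\beta^{2}+6\beta+1=(3\beta+1)^{2}$ the leading part of $\mu_{\eta,4}(x)$ is $3x^{2}(3\beta+1)^{2}/(\beta^{2}\eta^{2})$. Absorbing the remaining lower-order terms (those carrying higher powers of $1/\eta$) into constants $\mathcal{E}_{1},\mathcal{E}_{2}>0$ gives $\mu_{\eta,2}(x)\leq \mathcal{E}_{1}\,x(3\beta+1)/(\beta\eta)$ and $\mu_{\eta,4}(x)\leq \mathcal{E}_{2}\,x^{2}(3\beta+1)^{2}/(\beta^{2}\eta^{2})$; substituting, simplifying $\sqrt{\eta}\sqrt{\mu_{\eta,2}(x)}=\sqrt{\mathcal{E}_{1}x(3\beta+1)/\beta}$ and $\sqrt{\mu_{\eta,4}(x)}=\frac{1}{\eta}\sqrt{\mathcal{E}_{2}x^{2}(3\beta+1)^{2}/\beta^{2}}$, and regrouping produces precisely the claimed right-hand side. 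I expect the main obstacle to be the bookkeeping in this last step: the constants $\mathcal{E}_{1},\mathcal{E}_{2}$ must be chosen so that the full moment polynomials are genuinely dominated by their leading terms over the relevant range of $x$ and $\eta$ (the lower-order terms do not vanish at $x=0$, so a mild restriction such as $\eta\geq1$ is needed), and the four contributions must be combined into exactly the form $1+\mathcal{E}_{1}(\cdots)+\sqrt{\mathcal{E}_{1}(\cdots)}\big(1+\frac{1}{\eta}\sqrt{\mathcal{E}_{2}(\cdots)}\big)$ displayed in the statement.
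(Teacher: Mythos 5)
Your proposal follows essentially the same route as the paper's own proof: the same use of the scaling property of $\Omega$ from Lemma \ref{lemma6}(ii) (the paper keeps $\delta$ generic and sets $\delta=1/\sqrt{\eta}$ only at the end, which is immaterial), the same expansion into the four pieces $1$, $(z-x)^{2}$, $|z-x|$, $|z-x|^{3}$ handled by Cauchy--Bunyakovsky--Schwarz, and the same moment bounds $\mu_{\eta,2}(x)\leq\mathcal{E}_{1}x(3\beta+1)/(\beta\eta)$ and $\mu_{\eta,4}(x)\leq\mathcal{E}_{2}x^{2}(3\beta+1)^{2}/(\beta^{2}\eta^{2})$ followed by the identical regrouping. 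The bookkeeping caveat you flag at the end (the lower-order moment terms do not vanish at $x=0$, so these bounds cannot hold there for any fixed constants $\mathcal{E}_{1},\mathcal{E}_{2}$) applies equally to the paper, which asserts the bounds without comment; your version is, if anything, more candid about this shared gap.
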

\begin{proof}
	For $x,z\in[0,\infty)$. From (\ref{eqn19}) and Lemma \ref{lemma6}, we get \\
	\begin{align}\label{eqn20}
		|\Phi(z)-\Phi(x)|&\leq (1+(z-x)^{2})\zeta(x)\Omega\left(\Phi;\frac{|z-x|\delta}{\delta}\right)\nonumber\\
		&\leq2(1+\delta^{2})\zeta(x)\left(1+\frac{|z-x|}{\delta}\right)(1+(z-x)^{2})\Omega(\Phi;\delta).
	\end{align}
	Applying $\mathcal{R}_{\eta,\kappa}^{(\alpha,\beta)}$ on (\ref{eqn20}),
	\begin{align*}
		|\mathcal{R}_{\eta,\kappa}^{(\alpha,\beta)}(\Phi;x)-\Phi(x)|&\leq2(1+\delta^{2})\zeta(x)\Omega(\Phi;\delta)\mathcal{R}_{\eta,\kappa}^{(\alpha,\beta)}\left[\left(1+\frac{|z-x|}{\delta}\right)(1+(z-x)^{2});x\right]\\
		&=2(1+\delta^{2})\zeta(x)\Omega(\Phi;\delta)\left[1+\mu_{\eta,2}(x)+\frac{1}{\delta}\mathcal{R}_{\eta,\kappa}^{(\alpha,\beta)}(|z-x|;x)\right.\\
		&\left.\quad+\frac{1}{\delta}\mathcal{R}_{\eta,\kappa}^{(\alpha,\beta)}(|z-x|(z-x)^{2};x)\right].
	\end{align*}
	
	By Cauchy-Bunyakovsky-Schwarz Inequality,
	\begin{align}\label{eqn21}
		|\mathcal{R}_{\eta,\kappa}^{(\alpha,\beta)}(\Phi;x)-\Phi(x)|&\leq2(1+\delta^{2})\zeta(x)\Omega(\Phi;\delta)\left[1+\mu_{\eta,2}(x)\right.\nonumber\\
		&\left.+\frac{1}{\delta}(\mu_{\eta,2}(x))^{1/2}+\frac{1}{\delta}\left(\mu_{\eta,2}(x)\right)^{1/2}\left(\mu_{\eta,4}(x)\right)^{1/2}\right].
	\end{align}
	From Lemma (\ref{lemma4}), we have\\
	$$\mu_{\eta,2}(x)\leq\frac{\mathcal{E}_{1}x(3\beta+1)}{\beta\eta},$$
	and
	$$\mu_{\eta,4}(x)\leq\frac{\mathcal{E}_{2}x^{2}(3\beta+1)^{2}}{\beta^{2}\eta^{2}},$$
	
	where $\mathcal{E}_{1}>1$ and $\mathcal{E}_{2}>1$ are constants.\\
	Using the above inequalities in (\ref{eqn21}) and choosing $\delta=\frac{1}{\sqrt{\eta}}$, we get
	\begin{align*}
		|\mathcal{R}_{\eta,\kappa}^{(\alpha,\beta)}(\Phi;x)-\Phi(x)|&\leq2\left(1+\frac{1}{\eta}\right)\zeta(x)\Omega\left(\Phi;\frac{1}{\sqrt{\eta}}\right)\left(1+\mathcal{E}_{1}\frac{x(3\beta+1)}{\beta\eta}+\sqrt{\mathcal{E}_{1}\frac{x(3\beta+1)}{\beta}}\right.\\
		&\left.\quad\times\left(1+\frac{1}{\eta}\sqrt{\mathcal{E}_{2}\frac{x^{2}(3\beta+1)^{2}}{\beta^{2}}}\right)\right).
	\end{align*}
\end{proof}		
\begin{theorem}\label{theorem7}
	For each $\Phi\in\mathcal{C}_{\zeta}^{*}([0,\infty))$, then
	\begin{equation*}
		\lim_{\eta\rightarrow\infty}\|\mathcal{R}_{\eta,\kappa}^{(\alpha,\beta)}(\Phi;.)-\Phi\|_{\zeta}=0.
	\end{equation*}
\end{theorem}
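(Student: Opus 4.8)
The plan is to invoke the weighted Korovkin theorem of Gadzhiev, which guarantees that a uniformly bounded sequence of positive linear operators acting on $\mathcal{C}_{\zeta}^{*}([0,\infty))$ converges in the $\|\cdot\|_{\zeta}$-norm to the identity on the whole space precisely when it does so on the three test functions $\varrho_{0},\varrho_{1},\varrho_{2}$. Accordingly, I would reduce the theorem to verifying
$$\lim_{\eta\to\infty}\|\mathcal{R}_{\eta,\kappa}^{(\alpha,\beta)}(\varrho_{\jmath};\cdot)-\varrho_{\jmath}\|_{\zeta}=0,\qquad \jmath=0,1,2,$$
and each of these three limits can be extracted directly from the moment formulas of Lemma \ref{lemma3}. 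The polynomial growth of those moments also certifies that $\mathcal{R}_{\eta,\kappa}^{(\alpha,\beta)}$ maps $\mathcal{C}_{\zeta}^{*}([0,\infty))$ into $\mathcal{B}_{\zeta}([0,\infty))$, so the operators are well defined on the weighted space and the hypotheses of the weighted Korovkin machinery are in place.

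First I would dispose of $\jmath=0$, where Lemma \ref{lemma3} gives $\mathcal{R}_{\eta,\kappa}^{(\alpha,\beta)}(\varrho_{0};x)=1=\varrho_{0}(x)$ and the norm is identically zero. For $\jmath=1$ the lemma yields $\mathcal{R}_{\eta,\kappa}^{(\alpha,\beta)}(\varrho_{1};x)-x=\frac{1+\alpha}{\eta}$, so that
$$\|\mathcal{R}_{\eta,\kappa}^{(\alpha,\beta)}(\varrho_{1};\cdot)-\varrho_{1}\|_{\zeta}=\sup_{x\in[0,\infty)}\frac{1+\alpha}{\eta(1+x^{2})}=\frac{1+\alpha}{\eta}\longrightarrow0.$$
For $\jmath=2$ the difference $\mathcal{R}_{\eta,\kappa}^{(\alpha,\beta)}(\varrho_{2};x)-x^{2}$ equals $\frac{x(2\alpha\beta\eta+5\beta\eta+\eta)+\beta(\alpha^{2}+4\alpha+3)+\alpha+1}{\eta^{2}\beta}$, an affine function of $x$ whose linear coefficient is $O(1/\eta)$ and whose constant term is $O(1/\eta^{2})$.

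The one point that genuinely needs care—and the main obstacle—is that the supremum defining $\|\cdot\|_{\zeta}$ ranges over the unbounded interval $[0,\infty)$, so I must check that the linear-in-$x$ part of the $\jmath=2$ difference is absorbed by the weight $\zeta(x)=1+x^{2}$. This is settled by the uniform bounds $\frac{x}{1+x^{2}}\le\tfrac12$ and $\frac{1}{1+x^{2}}\le1$ on $[0,\infty)$: dividing the affine expression by $1+x^{2}$ and taking the supremum gives an estimate of the form $C_{1}/\eta+C_{2}/\eta^{2}$ with constants depending only on $\alpha,\beta$, which tends to $0$. Once the three test-function limits are in hand, Gadzhiev's theorem applies—the restriction to the subspace $\mathcal{C}_{\zeta}^{*}([0,\infty))$, on which $\lim_{x\to\infty}\Phi(x)/\zeta(x)$ exists, is exactly the hypothesis that makes the weighted Korovkin theorem valid—and the desired conclusion $\lim_{\eta\to\infty}\|\mathcal{R}_{\eta,\kappa}^{(\alpha,\beta)}(\Phi;\cdot)-\Phi\|_{\zeta}=0$ follows.
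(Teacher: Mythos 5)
Your proposal is correct and follows essentially the same route as the paper: both invoke Gadzhiev's weighted Korovkin theorem from \cite{gadjiev1976theorems} and then verify the three test-function limits $\jmath=0,1,2$ directly from the moment formulas of Lemma \ref{lemma3}. If anything, your treatment of $\jmath=2$ is slightly more careful than the paper's, which writes the second moment without the $\beta$-terms (apparently copying Lemma \ref{lemma2} rather than Lemma \ref{lemma3}) and dismisses the supremum with ``Clearly,'' whereas you justify it via the bounds $x/(1+x^{2})\leq\tfrac12$ and $1/(1+x^{2})\leq1$.
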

\begin{proof}
	Using \cite{gadjiev1976theorems}, to prove this theorem, it is sufficient to verify the following conditions
	\begin{equation}\label{eqn22}
		\lim_{\eta\rightarrow\infty}\|\mathcal{R}_{\eta,\kappa}^{(\alpha,\beta)}(\varrho_{\jmath};x)-x^{\jmath}\|_{\zeta}=0,\;\jmath=0,1,2.
	\end{equation}
	Since $\mathcal{R}_{\eta,\kappa}^{(\alpha,\beta)}(\varrho_{0};x)=1$, so for $\jmath=0$ (\ref{eqn22}) holds.\\
	By Lemma \ref{lemma3},
	\begin{align*}
		\|\mathcal{R}_{\eta,\kappa}^{(\alpha,\beta)}(\varrho_{1};x)-x\|_{\zeta}&= \sup_{x\in[0,\infty)}\frac{|\mathcal{R}_{\eta,\kappa}^{(\alpha,\beta)}(\varrho_{1};x)-x|}{\zeta(x)}\\
		&=\frac{1}{\eta}\sup_{x\in[0,\infty)}\left(\frac{1+\alpha}{\zeta(x)}\right)\rightarrow 0\;\mbox{as}\;\eta\rightarrow\infty.\\
	\end{align*}
	The condition (\ref{eqn22}) holds for $\jmath=1$.\\
	Again by Lemma \ref{lemma3},
	\begin{align*}
		\|\mathcal{R}_{\eta,\kappa}^{(\alpha,\beta)}(\gamma_{2};x)-x^{2}\|_{\zeta}&= \sup_{x\in[0,\infty)}\frac{|\mathcal{R}_{\eta,\kappa}^{(\alpha,\beta)}(\gamma_{2};x)-x^{2}|}{\zeta(x)}\\
		&=\frac{1}{\eta^{2}}\sup_{x\in[0,\infty)}\left(\frac{x(2\alpha\eta+5\eta)+\alpha^{2}+4\alpha+3}{\zeta(x)}\right).\\
	\end{align*}
	Clearly,\;$\|\mathcal{R}_{\eta,\kappa}^{(\alpha,\beta)}(\varrho_{2};x)-x^{2}\|_{\zeta}\rightarrow 0\;\mbox{as}\;\eta\rightarrow\infty$,
	the condition (\ref{eqn22}) holds for $\jmath=2$.\\
	Hence the theorem proved.
\end{proof}
\begin{theorem}
	For each $\Phi\in\mathcal{C}_{\zeta}([0,\infty))$ and $\ell>1$, then
	\begin{equation*}
		\lim_{\eta\rightarrow\infty}\sup_{x\in[0,\infty)}\frac{|\mathcal{R}_{\eta,\kappa}^{(\alpha,\beta)}(\Phi;x)-\Phi(x)|}{(\zeta(x))^{\ell}}=0.
	\end{equation*}
\end{theorem}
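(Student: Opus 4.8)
The plan is to exploit the extra decay furnished by the exponent $\ell>1$ in the denominator and to reduce the problem to a compact interval, on which a quantitative estimate already proved can be applied. Fix $\epsilon>0$. I would split the supremum over $[0,\infty)$ into a supremum over a compact piece $[0,x_{0}]$ and a tail $(x_{0},\infty)$, where $x_{0}$ is to be chosen large, and treat the two pieces by completely different means.

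For the tail I would use only the crude size of the two functions. Since $\Phi\in\mathcal{C}_{\zeta}([0,\infty))$ we have $|\Phi(x)|\le\mathcal{M}_{\Phi}\zeta(x)$, and by positivity $|\mathcal{R}_{\eta,\kappa}^{(\alpha,\beta)}(\Phi;x)|\le\mathcal{M}_{\Phi}\,\mathcal{R}_{\eta,\kappa}^{(\alpha,\beta)}(\zeta;x)$; moreover $\mathcal{R}_{\eta,\kappa}^{(\alpha,\beta)}(\zeta;x)=1+\mathcal{R}_{\eta,\kappa}^{(\alpha,\beta)}(\varrho_{2};x)$ is, by Lemma \ref{lemma3}, a quadratic in $x$ whose coefficients remain bounded as $\eta\to\infty$, so there is a constant $C$, independent of $\eta$ for $\eta\ge1$, with $\mathcal{R}_{\eta,\kappa}^{(\alpha,\beta)}(\zeta;x)\le C\zeta(x)$. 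Hence on the tail
\begin{equation*}
\frac{|\mathcal{R}_{\eta,\kappa}^{(\alpha,\beta)}(\Phi;x)-\Phi(x)|}{(\zeta(x))^{\ell}}\le\frac{(1+C)\mathcal{M}_{\Phi}\,\zeta(x)}{(\zeta(x))^{\ell}}=\frac{(1+C)\mathcal{M}_{\Phi}}{(1+x^{2})^{\ell-1}},
\end{equation*}
and because $\ell>1$ the right-hand side tends to $0$ as $x\to\infty$; I would choose $x_{0}$ so large that this quantity is below $\epsilon$ for every $x>x_{0}$ and every $\eta\ge1$, a bound that is uniform in $\eta$.

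For the compact piece I would use $(\zeta(x))^{\ell}\ge1$ on $[0,x_{0}]$, so that the quotient is dominated by $|\mathcal{R}_{\eta,\kappa}^{(\alpha,\beta)}(\Phi;x)-\Phi(x)|$ itself, and then invoke the weighted modulus of continuity estimate established just above. Its right-hand side is the product of $\Omega(\Phi;1/\sqrt{\eta})$ with a factor $\zeta(x)$ and a polynomial expression in $x$; restricted to the bounded set $[0,x_{0}]$ these $x$-dependent factors stay bounded uniformly in $\eta$, while $\Omega(\Phi;1/\sqrt{\eta})\to0$ by Lemma \ref{lemma6}(iii). Consequently $\sup_{x\in[0,x_{0}]}|\mathcal{R}_{\eta,\kappa}^{(\alpha,\beta)}(\Phi;x)-\Phi(x)|\to0$, hence is below $\epsilon$ for all sufficiently large $\eta$. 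Combining the two pieces puts the supremum over all of $[0,\infty)$ below $\epsilon$ for large $\eta$, which is the claim.

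The main obstacle is the compact piece, precisely because the operator is defined by an integral over the whole half-line, so its value at a point of $[0,x_{0}]$ still feels the behaviour of $\Phi$ near infinity; one cannot simply quote the Bohman--Korovkin theorem, which is stated for bounded functions. The device that resolves this is that the already-established weighted estimate keeps the unbounded growth in $x$ isolated as an explicit polynomial factor, which becomes harmless once $x$ is confined to $[0,x_{0}]$. Should one prefer a self-contained argument, the same conclusion follows by writing $\mathcal{R}_{\eta,\kappa}^{(\alpha,\beta)}(\Phi;x)-\Phi(x)$ as the integral of $\Phi(z)-\Phi(x)$ against the probability kernel, splitting into $|z-x|<\gamma$ and $|z-x|\ge\gamma$, bounding the near part by the local modulus of continuity of $\Phi$ on a compact set and the far part via Chebyshev using $\mu_{\eta,2}(x)$ and $\mu_{\eta,4}(x)$ from Lemma \ref{lemma4}, both of which vanish as $\eta\to\infty$.
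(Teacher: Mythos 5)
Your argument is correct insofar as the paper's own toolkit goes, and it follows the paper's overall skeleton: the paper also splits the supremum at a fixed $x_{0}$, bounds the tail via $|\Phi|\leq\|\Phi\|_{\zeta}\,\zeta$ together with positivity of the operator and the second-moment formula of Lemma \ref{lemma3}, and uses $\ell>1$ to make the tail small. Where you genuinely diverge is the compact piece. The paper dominates $\sup_{x\leq x_{0}}$ by the weighted norm $\|\mathcal{R}_{\eta,\kappa}^{(\alpha,\beta)}(\Phi;\cdot)-\Phi\|_{\zeta}$ and cites Theorem \ref{theorem7}, whereas you dominate it by $\sup_{x\leq x_{0}}|\mathcal{R}_{\eta,\kappa}^{(\alpha,\beta)}(\Phi;x)-\Phi(x)|$ (using $(\zeta(x))^{\ell}\geq1$) and feed it into the quantitative weighted-modulus estimate together with Lemma \ref{lemma6}(iii). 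Your route buys an explicit rate, $C(x_{0})\,\Omega(\Phi;1/\sqrt{\eta})$, on the compact piece, and you are also more careful than the paper on the tail, making the constant in $\mathcal{R}_{\eta,\kappa}^{(\alpha,\beta)}(\zeta;x)\leq C\zeta(x)$ explicitly uniform in $\eta\geq1$; the paper only remarks that the tail terms ``can be made small enough,'' leaving the needed uniformity in $\eta$ implicit.

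One caveat applies to both arguments, and is worth stating since you flag the function-class issue yourself. Theorem \ref{theorem7} is stated only for $\Phi\in\mathcal{C}_{\zeta}^{*}([0,\infty))$, so the paper's citation of it for a mere $\Phi\in\mathcal{C}_{\zeta}([0,\infty))$ is a gap; your replacement does not fully close it, because the weighted-modulus theorem you invoke is stated for $\Phi\in\mathcal{C}_{[0,\infty)}^{\mathcal{B}}$, and property (iii) of Lemma \ref{lemma6}, although the paper asserts it for $\mathcal{C}_{\zeta}([0,\infty))$, genuinely requires the limit condition defining $\mathcal{C}_{\zeta}^{*}([0,\infty))$: for instance, $\Phi(x)=(1+x^{2})\sin(x^{2})$ lies in $\mathcal{C}_{\zeta}([0,\infty))$ but $\Omega(\Phi;\delta)\not\rightarrow0$ as $\delta\rightarrow0$. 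The self-contained alternative you sketch at the end --- splitting the kernel integral into $|z-x|<\gamma$ and $|z-x|\geq\gamma$, controlling the near part by the local modulus of continuity on a compact set and the far part by a Chebyshev-type bound via $\mu_{\eta,2}$ and $\mu_{\eta,4}$ of Lemma \ref{lemma4} --- is the only one of the three arguments that works for every $\Phi\in\mathcal{C}_{\zeta}([0,\infty))$, and would be the preferred way to prove the theorem rigorously as stated.
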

\begin{proof}
	For any fixed $x_{0}>0$,
	\begin{align*}
		\sup_{x\in[0,\infty)}\frac{|\mathcal{R}_{\eta,\kappa}^{(\alpha,\beta)}(\Phi;x)-\Phi(x)|}{(\zeta(x))^{\ell}}&\leq\sup_{x\leq x_{0}}\frac{|\mathcal{R}_{\eta,\kappa}^{(\alpha,\beta)}(\Phi;x)-\Phi(x)|}{(\zeta(x))^{\ell}}+\sup_{x\geq x_{0}}\frac{|\mathcal{R}_{\eta,\kappa}^{(\alpha,\beta)}(\Phi;x)-\Phi(x)|}{(\zeta(x))^{\ell}}\\
		&\leq\|\mathcal{R}_{\eta,\kappa}^{(\alpha,\beta)}(\Phi;.)-\Phi\|_{\zeta^{\ell}}+\|\Phi\|_{\zeta}\sup_{x\geq x_{0}}\frac{|\mathcal{R}_{\eta,\kappa}^{(\alpha,\beta)}(1+z^{2};x)|}{(\zeta(x))^{\ell}}\\&\quad+\sup_{x\geq x_{0}}\frac{|\Phi(x)|}{(\zeta(x))^{\ell}}.
	\end{align*}
	By Theorem \ref{theorem7} the second term from the left in the above inequality tends to $0$ as $\eta\rightarrow\infty$ and for the fixed  $x_{0}$, if we choose $x_{0}$ large enough, then terms $\displaystyle\|\Phi\|_{\zeta}\sup_{x\geq x_{0}}\frac{|\mathcal{R}_{\eta,\kappa}^{(\alpha,\beta)}(1+z^{2};x)|}{(\zeta(x))^{\ell}}$ and $\displaystyle\sup_{x\geq x_{0}}\frac{|\Phi(x)|}{(\zeta(x))^{\ell}}$ can be made small enough. Thus the desired proof is proved.
\end{proof}
\section{Graphical and numerical analysis}
The graphical representations present convergence of the proposed operators on the intervals $[0,1]$ and $[0,2]$ to $\Phi(x)$ for the different values of parameters $\alpha$ and $\beta$ in the Figures \ref{figure1} and \ref{figure3}. The absolute error $\epsilon_{\eta}^{(\alpha,\beta)}(x)=|\mathcal{R}_{\eta,\kappa}^{(\alpha,\beta)}(\Phi;x)-\Phi(x)|$ to $\Phi(x)$ for different values of $x$ over  $[0,2]$ is calculated in the Tables \ref{table1} and \ref{table2}. Errors are represented graphically in Figures \ref{figure2} and \ref{figure4}. 
\begin{example}
	Let us consider test function $\Phi(x)=e^{-5x}x$ and $\alpha=1, \beta=0.98, \eta=25, 50, 75, 100$.
\end{example}
\begin{figure}[ht]
	\centering
	\includegraphics[width=5cm]{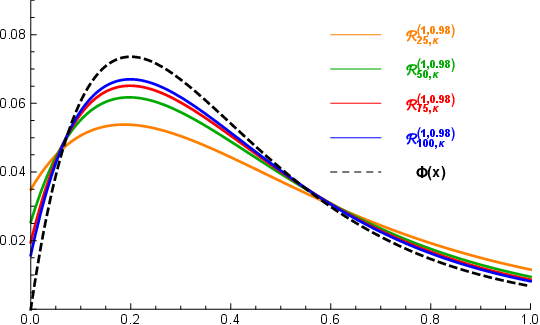}
	\caption{Convergence of $\mathcal{R}_{25,\kappa}^{(1,0.98)}(orange),\mathcal{R}_{50,\kappa}^{(1,0.98)}(green),\mathcal{R}_{75,\kappa}^{(1,0.98)}(red),\mathcal{H}_{100,\kappa}^{(1,0.98)}(blue)$ and test function(black)}
	\label{figure1}
\end{figure}
\begin{figure}[ht]
	\centering
	\includegraphics[width=5cm]{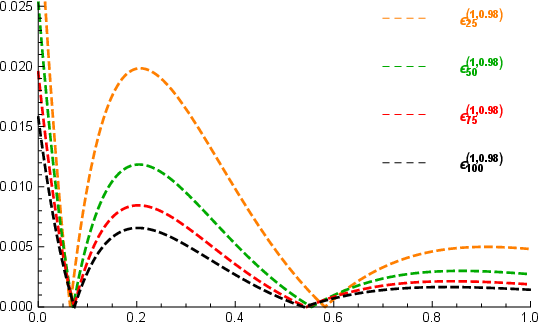}
	\caption{Graph of errors $\epsilon_{25}^{(1,0.98)}(orange),\epsilon_{50}^{(1,0.98)}(green),\epsilon_{75}^{(1,0.98)}(red), \epsilon_{100}^{(1,0.98)}(black)$}
	\label{figure2}
\end{figure}
\begin{table}[ht]
	\caption{Table for absolute error of the operators $\mathcal{R}_{\eta,\kappa}^{(\alpha,\beta)}$}
	\centering
	\begin{tabular}{l|c|c|c|c}\label{table1}
		x & $\eta=25$ & $\eta=50$ & $\eta=75$ & $\eta=100$ \\
		&$\alpha=1, \beta=0.98$&$\alpha=1, \beta=0.98$&$\alpha=1, \beta=0.98$&$\alpha=1, \beta=0.98$\\
		\hline
		0 & 0.034901800 & 0.025423800 & 0.019518100 & 0.015763700 \\
		0.5 & 0.003740950 & 0.001533680 & 0.000898391 & 0.000617602  \\
		1.0 & 0.004815770 & 0.002722240 & 0.001880080 & 0.001433030\\
		1.5 & 0.002000540 & 0.000915633 & 0.000583079 & 0.000425828\\
		2.0 & 0.000534200 & 0.000197393 & 0.000115992 & 0.000081252
		\\\hline
		x & $\eta=25$ & $\eta=50$ & $\eta=75$ & $\eta=100$ \\
		&$\alpha=5, \beta=10$&$\alpha=5, \beta=10$&$\alpha=5, \beta=10$&$\alpha=5, \beta=10$\\\hline
		0 & 0.060720100 & 0.057087900 & 0.048077200 & 0.040729100 \\
		0.5 & 0.016013200 & 0.009493930 & 0.006739870 & 0.005223240  \\
		1.0 & 0.000626519 & 0.000237580 & 0.000133640 & 0.000089514\\
		1.5 & 0.000436015 & 0.000248134 & 0.000171949 & 0.000131331\\
		2.0 & 0.000150698 & 0.000071107 & 0.000045981 & 0.000033881
	\end{tabular}
\end{table}
\begin{example}
	Let us consider test function $\Phi(x)=x^{3}-2x^{2}+3$ and $\alpha=0.5, \beta=2, \eta=25, 50, 75, 100$.
\end{example}
\begin{figure}[ht]
	\centering
	\includegraphics[width=5cm]{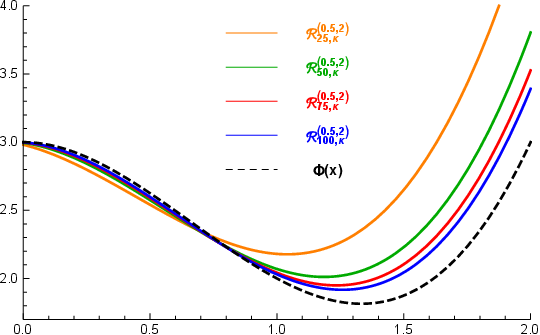}
	\caption{Convergence of $\mathcal{R}_{25,\kappa}^{(0.5,2)}(orange),\mathcal{R}_{50,\kappa}^{(0.5,2)}(green),\mathcal{R}_{75,\kappa}^{(0.5,2)}(red),\mathcal{H}_{100,\kappa}^{(0.5,2)}(blue)$ and test function(blue)}
	\label{figure3}
\end{figure}
\begin{figure}[ht]
	\centering
	\includegraphics[width=5cm]{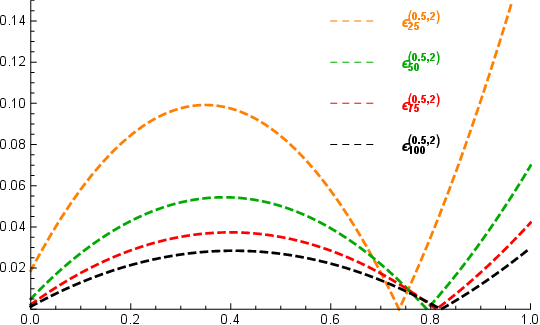}
	\caption{Graph of errors $\epsilon_{25}^{(0.5,2)}(orange),\epsilon_{50}^{(0.5,2)}(green),\epsilon_{75}^{(0.5,2)}(red), \epsilon_{100}^{(0.5,2)}(black)$}
	\label{figure4}
\end{figure}
\begin{table}[ht]
	\caption{Table for absolute error of the operators $\mathcal{R}_{\eta,\kappa}^{(\alpha,\beta)}$}
	\centering
	\begin{tabular}{l|c|c|c|c}\label{table2}
		x & $\eta=25$ & $\eta=50$ & $\eta=75$ & $\eta=100$ \\
		&$\alpha=2, \beta=0.5$&$\alpha=2, \beta=0.5$&$\alpha=2, \beta=0.5$&$\alpha=2, \beta=0.5$\\
		\hline
		0 & 0.06316800 & 0.01869600 & 0.00873956 & 0.00503700\\
		0.5 & 0.10636800 & 0.08449600 & 0.06242840 & 0.04898700 \\
		1.0 & 0.45043200 & 0.14970400 & 0.08388270 & 0.05706300\\
		1.5 & 1.60723000 & 0.68390400 & 0.43019400 & 0.31311300 \\
		2.0 & 3.36403000 & 1.51810000 & 0.97650500 & 0.71916300\\
		\hline
		x & $\eta=25$ & $\eta=50$ & $\eta=75$ & $\eta=100$ \\
		&$\alpha=0.5, \beta=2$&$\alpha=0.5, \beta=2$&$\alpha=0.5, \beta=2$&$\alpha=0.5, \beta=2$\\\hline
		0 & 0.01876800 & 0.00504600 & 0.00229511 & 0.00130575\\
		0.5 & 0.08416800 & 0.05014600 & 0.03511730 & 0.02695580\\
		1.0 & 0.18043200 & 0.06975400 & 0.04206040 & 0.02989420\\
		1.5 & 0.77503200 & 0.35465400 & 0.22923800 & 0.16924400\\
		2.0 & 1.69963000 & 0.80455400 & 0.52641600 & 0.39109400
	\end{tabular}
\end{table}
\newpage
\section{Conclusion}
This study explores the innovative aspects of the integral type of construction of operators adopting modified Laguerre polynomials and P\u{a}lt\u{a}nea basis. It covers various aspects of the operators, including approximation properties, convergence rate, Voronovskaja-type asymptotic formula, and weighted approximation. 
The adaptability and convergence of the proposed operators are vital aspects of the study and depend on the choice of parameters $\alpha, \beta$ and  $\eta$, respectively. It explores how different values of these parameters impact the performance of operators. The graphs are also used to dream up the performance of operators under various selections of $\eta$ and parameters. Graphs provide a more instinctive understanding of how these parameters affect the behavior of proposed operators, which can be especially helpful for conveying findings to others.  
\section*{Declarations}
\textbf{Conflict of interest} The authors declare that there are no conflicts of interest.
	\bibliographystyle{plain}
	\bibliography{nonlinear}
	
\end{document}